	\newtheorem{thm}{Theorem}[section]
	\newaliascnt{lemma}{thm}  
	\newtheorem{lemma}[lemma]{Lemma}  
	\newaliascnt{prop}{thm}  
	\newtheorem{prop}[prop]{Proposition} 
	\newaliascnt{defn}{thm}  
	\newaliascnt{cor}{thm}  
	\newaliascnt{rem}{thm}  
	\newtheorem{rem}[rem]{Remark} 
	\newaliascnt{expl}{thm}  
	\newtheorem{expl}[expl]{Example} 
	\newcommand{\D}[1]{\operatorname{D}\left(#1\right)}
	\newcommand{\oLambda}{\operatorname{\Lambda}}
	\newcommand{\de}{\hskip.4pt\operatorname{d}\hskip-.8pt}
	\newcommand{\distr}[2]{$\left\langle\;#1\,,\,#2\;\right\rangle$}
	\newcommand{\ins}{\;\lrcorner\;}
	\def\cprime{$'$}
	\def\cocoa{\href{http://cocoa.dima.unige.it}{\hbox{\rm C\kern-.13em o\kern-.07em C\kern-.13em o\kern-.15em A}}}
\begin{document}

\title[Scalar Differential Invariants of Symplectic MAEs]{Scalar Differential Invariants of\\Symplectic Monge-Amp\`ere Equations}
\author{Alessandro De Paris and Alexandre M.\ Vinogradov}
\date{}
\begin{abstract}
All second order scalar differential invariants of symplectic hyperbolic and elliptic Monge-Amp\`ere equations with respect to symplectomorphisms are explicitly computed. In particular, it is shown that the number of independent second order invariants is equal to $7$, in sharp contrast with general Monge-Amp\`ere equations for which this number is equal to $2$. We also introduce a series of invariant differential forms and vector fields which allows us to construct numerous scalar differential invariants of higher order. The introduced invariants give a solution of the symplectic equivalence of Monge-Amp\`ere equations. As an example we study equations of the form $u_{xy}+f(x,y,u_x,u_y)=0$ and in particular find a simple linearization criterion.
\end{abstract}
\maketitle

\medskip
\noindent\textbf{Key Words}: \emph{Monge-Amp\`ere equation, scalar differential invariant, symplectic manifold, tangent distribution.}

\medskip
\noindent\textbf{MSC (2000)}: 58J70, 53D99.

\section{Introduction}

The class of Monge-Ampere equations (MAE) is, maybe, the simplest class of nonlinear PDEs which have a large spectrum of applications to geometry and mathematical physics. This class is invariant with respect to contact transformations as it was already observed by S.~Lie who set up the problem of contact classification of MAEs.  The recent progress in geometry of nonlinear PDEs revealed a high complexity of this problem which is equivalent to an explicit description of the algebra of contact scalar differential invariants of MAEs. On the other hand, in last two decades the related equivalence problem for elliptic and hyperbolic MAE was solved in the sense that differential invariants that are sufficient to distinguish two such equations were proposed. In particular, this was done in a systematic manner in the dissertation of A.~Kushner who synthesized previously proposed approaches and techniques.  His results are reported in book \cite{KLR}.  Kushner's approach is based on the use of machinery of effective forms in contact geometry, proposed to this end by V.~Lychagin at late 70s and since that time actively exploited by himself and his collaborators. At the same time an alternative approach, which is based on solution singularity theory, was proposed by the second author. One of its advantages is that it is applied to parabolic MAEs as well (see \cite{Vin2}, \cite{CV}). This approach focuses on construction of scalar differential invariants (SDI) of MAEs which are also indispensable for the classification problem. In particular, simplest SDIs, sufficient for solution of the equivalence problem, were constructed in \cite{MVY} for generic hyperbolic MAEs. These invariants are of second and third orders, i.e., depend on 2-nd and 3-rd order derivatives of coefficients of MAEs.  
This paper is a natural continuation of \cite{MVY}. We construct simplest SDIs which are sufficient for solution of the equivalence problem for non-generic elliptic and hyperbolic MAEs. Namely, we consider MAEs possessing at least one infinitesimal symmetry. In poor words, these are MAEs whose coefficients do not depend explicitly on the unknown function in a suitable local chart. Such a MAE may be naturally interpreted as a condition imposed on Lagrangian submanifolds of a symplectic 4-fold and by this reason the study of such equation reduces to some questions in symplectic geometry. For instance, a hyperbolic MAE of this kind is completely characterized by an associated 2-dimensional \hbox{non-Lagrangian} distribution $D$ on a symplectic 4-fold $M$ and its solutions are interpreted as Lagrangian submanifolds $L$ of $M$ such that the restriction of $D$ to $L$ is one-dimensional. By following the terminology of \cite{KLR} we call \emph{symplectic} MAEs of this kind (shortly, SMAEs).

The main result of this paper is an explicit construction of simplest SDIs of  SMAEs  which, in particular, are sufficient for solution of the equivalence problem. More precisely, we, first, prove that the variety of second order symplectic SDIs of non-Lagrangian 2-distributions on a symplectic 4-fold is 7-dimensional.  This result is rather surprising, since for generic MAEs the analogous variety is 2-dimensional. Then we explicitly construct nine second order SDIs for such distributions and use them to assemble seven independent second order SDIs for SMAEs. According to the principle of $n$-invariants (see \cite{ALV}, \cite{Vin3}),  one needs four independent SDIs  to solve the equivalence problem for generic SMAEs.  Therefore,  this proves that second order SDIs resolve this problem.

To our knowledge, the first solution of the equivalence problem for generic SMAEs was proposed by B.~Kruglikov in \cite{K}.  By using formalism of effective forms this author associates with a SMAE an e-structure, reducing in this way the equivalence problem for SMAEs to that for e-structures. It should be stressed that in this work SDIs of a SMAE are given in an implicit manner, namely as those of the associated e-structure. This prevents  direct manipulations with them. Also, these invariants are, mostly, of 3-rd order, i.e., not simplest ones, and it is not clear which of them are independent.  In the present paper we obtain as a byproduct six invariant vector fields. Various combinations of them give various e-structures invariantly associated with SMAEs. Those introduced by B. Kruglikov and A. Kushner are among them. Also, a machinery producing SDIs of order higher than two is briefly described. It is rather plausible that so-obtained invariants generate the whole algebra of scalar  differential invariants for SMAEs.

As an application we also discuss non-generic quasilinear SMAEs of the form $u_{xy}+f\left(x,y,u_x, u_y\right)=0$. In this case at most two of  general second order SDIs can be independent and hence additional special differential invariants are needed to solve the equivalence problem. As an illustration we construct a couple of them which allow to characterize symplectically linearizable hyperbolic MAEs. Another application of the found SDIs is a simple characterization of SMAEs which possess classical infinitesimal symmetries.

It should be stressed that scalar differential invariants of MAEs did not attract a due attention of researchers working in this field. For instance, in book \cite{KLR} just few of them are mentioned. On the other hand, SDIs are of crucial importance for the equivalence and classification problems, for practical computations of symmetries and conservation laws, etc.

The reader can find an extensive bibliography dedicated to history of topics considered in our paper in book~\cite{KLR}.

\section{Preliminaries}

\subsection{}\label{I}

The $\operatorname{C}^\infty(M)$--modules of (smooth) vector fields and $k$-forms on a smooth manifold $M$ will be denoted by $\D{M}$ and $\operatorname{\Lambda}^k(M)$, respectively. $\mathcal{L}_X$ stands for the Lie derivative along $X\in\D{M}$. Tensor products will always be understood over $\operatorname{C}^\infty(M)$. A \emph{(smooth, tangent) distribution} on $M$ we treat as a projective submodule $\mathcal{D}$ of $\D{M}$. According to Swan's theorem, such a submodule is naturally isomorphic to the module of smooth sections of a subbundle of $\operatorname{T}M$, which is more commonly taken as definition of the distribution $\mathcal{D}$. The fiber over $x\in M$ of this bundle will be denoted by $\mathcal{D}_x$. The common dimension $d$ of $\mathcal{D}_x$'s is called the dimension of $\mathcal{D}$ and we say that $\mathcal{D}$ is a $d$-distribution.

If vector fields $X_1,\ldots ,X_r$ (locally) generate a distribution $\mathcal{D}$ we write $\mathcal{D}=\left\langle X_1,\ldots ,X_r\right\rangle$. The insertion of a vector field $X$ into a $k$-form~$\alpha$ will be denoted or by $\operatorname{i}_X(\alpha)$, or by $X\ins\alpha$, i.e.,
\begin{multline*}
\operatorname{i}_X(\alpha)\left(X_1,\ldots ,X_{k-1}\right)=\left(X\ins\alpha\right)\left(X_1,\ldots ,X_{k-1}\right):=\alpha\left(X,X_1,\ldots ,X_{k-1}\right)\;,\\ X_i\in\D{M}\;.
\end{multline*}
The module of multi-vector fields of multiplicity $r$ (shortly, $r$-vectors) on $M$ will be denoted by $\operatorname{D}_r(M)$, i.e., $\operatorname{D}_r(M)=\bigwedge^r\D{M}$. If $W\in\operatorname{D}_r(M)$, $\alpha\in\operatorname{\Lambda}^k(M)$, then $W\ins\alpha\in\operatorname{\Lambda}^{k-r}(M)$ stands for the insertion of $W$ into $\alpha$. Recall that for $W=X_1\wedge\cdots\wedge X_r$ 
\[
\left(X_1\wedge\cdots\wedge X_r\right)\ins\alpha=X_r\ins\left(\ldots\ins \left(X_1\ins\alpha\right)\ldots\right)\:.
\]
Our construction of differential invariants needs \emph{vector valued differential $k$-forms}, that is, elements of the $\operatorname{C}^\infty(M)$--module $\operatorname{\Lambda}^k(M)\otimes\D{M}$. Since $\operatorname{\Lambda}^k(M)$, $\D{M}$ are projective modules, such forms may alternatively be understood as alternating functions of $k$ vector fields with values in vector fields (hence the name). For instance, $\operatorname{\Lambda}^1(M)\otimes\D{M}$ is naturally identified with $\operatorname{End}\left(\D{M}\right)$. The insertion of a vector-valued form $\omega\in\operatorname{\Lambda}^k(M)\otimes\D{M}$ into a form $\beta\in\operatorname{\Lambda}^r(M)$ will be denoted by $\omega\ins\beta\in\operatorname{\Lambda}^{r+k-1}(M)$. If $\omega=\alpha\otimes X$, then
\[
(\alpha\otimes X)\ins\beta=\alpha\wedge(X\ins\beta)\;.
\]

The explicit formula for $\omega\ins\beta$ is
\begin{multline}
\label{Formula}
\left(\omega\ins\beta\right)  \left(  X_{1},\ldots,X_{k+r-1}\right)=\\\sum_{\sigma\in S_{k,r-1}}\left(-1\right)^{\left| \sigma\right|  }
\beta\left(\omega\left(  X_{\sigma(1)},\ldots,X_{\sigma(k)}\right),  X_{\sigma(k+1)},\ldots ,X_{\sigma(k+r-1)}\right)\;,\\
X_{1},\ldots,X_{k+r-1}\in\D{M}\;,
\end{multline}
with $S_{k,r-1}$ being the set of permutations such that
\[
\sigma\left(  1\right)  <\cdots<\sigma\left(  k\right)\qquad\text{and}\qquad\sigma\left( k+1\right)<\cdots<\sigma\left(  k+r-1\right)\,,
\]
and $\left| \sigma\right|$ stands for the parity of $\sigma$.

\subsection{}

Let $\Omega$ be a symplectic form on a $2n$-dimensional manifold $M$. The isomorphism of $\operatorname{C}^{\infty}(M)$--modules
\[
\Gamma:\D{M}\overset{\sim}{\longrightarrow}\operatorname{\Lambda}^1(M)\;,\qquad X\mapsto X\ins\Omega
\]
is naturally associated with $\Omega$. Since
\[
\Omega\left(X,Y\right)=\Gamma\left(X\right)(Y)\;,
\]
$\Gamma$ uniquely determines $\Omega$. The isomorphism $\Gamma$ naturally extends  to an isomorphism of exterior algebras $D_{\ast}(M)\to\operatorname{\Lambda}^\ast(M)$, which will still be denoted by $\Gamma$:
\[
\Gamma\left(X_1\wedge\cdots\wedge X_k\right)=\Gamma\left(X_1\right)\wedge\cdots\wedge\Gamma\left(X_k\right)\;,\quad X_1,\ldots ,X_k\in\D{M}\;.
\]

Furthermore, $\Omega$ extends to $\operatorname{C}^{\infty}(M)$--bilinear forms on $D_k(M)$ and $\operatorname{\Lambda}^k(M)$:
\[
\langle A,B\rangle:=B\ins\Gamma(A)\;,\quad\langle\alpha ,\beta\rangle:=\Gamma^{-1}(\beta)\ins\alpha\;,\quad A,B\in\operatorname{D}_k(M)\;,\quad\alpha ,\beta\in\operatorname{\Lambda}^k(M)\;.
\]

These bilinear forms are graded-symmetric:
\[
\langle a,b \rangle=(-1)^k\langle b,a \rangle\;.
\]

The condition $\langle \mathcal{V}, \mathcal{V}\rangle=1$ on volume forms $\mathcal{V}\in\operatorname{\Lambda}^{2n}(M)$, together with the symplectic canonical orientation given by $\Omega^n$, select a privileged volume form, which turns out to be $ \mathcal{V}_{\Omega}:=(1/n!)\Omega^n$. The \emph{symplectic Hodge star}, denoted by $\ast$, is the operator $\operatorname{\Lambda}^k(M)\to\operatorname{\Lambda}^{2n-k}(M)$ uniquely defined by the condition
\[
\alpha\wedge\ast\beta=\langle\alpha,\beta\rangle \mathcal{V}_{\Omega}\,,\qquad\forall\alpha,\beta\in\operatorname{\Lambda}^k(M)\;.
\]

In this paper we shall be concerned with the case $n=2$ only. Canonical local coordinates for $\Omega$ will be denoted by $x,p,y,q$, i.e., locally, $\Omega=\de p\wedge\de x+\de q\wedge\de y$.

\subsection{Contact Manifolds}

Recall that a \emph{contact manifold} is a pair $\left(N,\mathcal{C}\right)$, where $N$ is an odd-dimensional manifold, say, $\dim N=2n+1$, and $\mathcal{C}$ is a `completely non-integrable' $2n$-distribution on $N$. This means that $\mathcal{C}$ does not admit nonzero characteristics, i.e., vector fields $X\in\mathcal{C}$ whose flow leaves $\mathcal{C}$ invariant. Locally $\mathcal{C}$ can be defined by an annihilating it form $\omega\in\operatorname{\Lambda}^1(N)$, i.e., $X\in\mathcal{C}\iff\omega(X)=0$ \footnote{Here and sometimes in the following we omit notation for restrictions onto open subsets, when dealing with local constructions.}. If $\omega$ is such a form, then $\left(\de\omega\right)^n\wedge\omega$ is nowhere $0$. Two vector fields $X,Y\in\mathcal{C}$ are \emph{$\mathcal{C}$-orthogonal} if $[X,Y]\in\mathcal{C}$. As it is easy to see, this is equivalent to $\de\omega(X,Y)=0$. Moreover, there exists a skew-symmetric $\operatorname{C}^{\infty}(N)$--bilinear form $\Theta$ on $\mathcal{C}$ such that $\Theta(X,Y)=0$ iff $X$ and $Y$ are $\mathcal{C}$-orthogonal. Such a form is unique up to a nowhere vanishing factor $f\in\operatorname{C}^{\infty}(N)$. For instance, $\left.\de\omega\right|_{\mathcal{C}}$ is locally such a form.

An $n$-dimensional submanifold $L\subseteq N$ is called \emph{Legendrian} if any two tangent to $L$ vectors are $\mathcal{C}$-orthogonal, i.e., $\left.\Theta\right|_L=0$.

\subsection{Contact Fields}
\label{CF}
A field $X\in\D{N}$ is called \emph{contact} if $[X,\mathcal{C}]\subseteq\mathcal{C}$, or, equivalently, $\mathcal{L}_X(\omega)=\lambda\omega$, $\lambda\in\operatorname{C}^{\infty}(N)$. Let $\nu_{\mathcal{C}}:=\D{N}/\mathcal{C}$. Recall that a contact field is uniquely characterized by its \emph{generating function} $F=X\!\!\!\mod\mathcal{C}\in\nu_{\mathcal{C}}$ and, conversely, to any $F\in\nu_{\mathcal{C}}$ an unique contact field denoted by $X_F$ corresponds (see \cite{KV}). Locally, a contact form $\omega$ establishes an isomorphism
\[
\nu_{\mathcal{C}}\overset{\sim}{\to}\operatorname{C}^{\infty}(N)\;,\quad X\!\!\!\!\mod\mathcal{C}\mapsto\omega(X)\;.
\]

Let $X$ be a nowhere vanishing contact field. Then trajectories of $X$ foliate $N$. Locally this foliation can be viewed as a one-dimensional fiber bundle $\sigma$ with a $2n$-dimensional base $M$. In this situation, $M$ is naturally supplied with a symplectic structure. Indeed, normalize a contact form $\omega$ by the condition $\omega(X)=1$. Then $\mathcal{L}_X(\omega)=0$ and $X\ins\de\omega=0$. These two conditions imply that $\de\omega=\sigma^\ast\left(-\Omega\right)$ where $\Omega\in\operatorname{\Lambda}^2(M)$. $\Omega$ is obviously closed and nondegenerate and hence is a symplectic form on $M$. The so-obtained pair $(M,\Omega)$ will be called the \emph{symplectic quotient of $(N,\mathcal{C})$ along $X$}.

The inverse procedure of \emph{contactization} of the symplectic manifold $(M,\Omega)$ may be \emph{locally} defined as follows. Set $N=M\times\mathbb{R}$, choose one of the many primitives that $\Omega$ locally admits, say $\rho$ and set $\omega=\pi_{\mathbb{R}}^\ast\de u-\pi_M^\ast\rho$. Here $u$ denotes the canonical coordinate function on $\mathbb{R}$ and $\pi_{\mathbb{R}},\pi_M$ stand for projections of $M\times\mathbb{R}$ onto $\mathbb{R}$ and $M$, respectively. By construction, a natural projection $\pi_M$ induces an isomorphism of the contact plane $\mathcal{C}_x$ and~$\operatorname{T}_{\pi_M(x)}M$.

\subsection{Jets}

In what follows, $\operatorname{J}^k(E,n)$ denotes the manifold of $k$-jets of $n$-subman\-ifolds of an $(n+m)$-manifold $E$ (see \cite[n.~0.2]{Vin}). When $E$ is fibered by $\pi:E\to B$, we also consider the submanifold $\operatorname{J}^k(\pi)\subseteq \operatorname{J}^k(E,n)$ of jets of sections. Recall that $\operatorname{J}^1\left(E^{n+1},n\right)$ has a canonical contact structure given by the \emph{Cartan distribution} (see \cite[n.~0.3, Example]{Vin}). Recall that if $\mathbf{1}_B:B\times\mathbb{R}\to B$ is the projection onto $B$, then there is a canonical projection $\operatorname{J}^1\left(\mathbf{1}_B\right)\to\operatorname{T}^\ast B$, which sends Legendrian submanifolds of $\operatorname{J}^1\left(\mathbf{1}_B\right)$ to Lagrangian submanifolds of $\operatorname{T}^\ast B$. A local chart $\left(x_1,\ldots ,x_n\right)$ on $B$ induces \emph{canonical coordinates} $\left(x_1,\ldots ,x_n,u,p_1,\ldots ,p_n\right)$ on $\operatorname{J}^1\left(\mathbf{1}_B\right)$. Recall also that there is a canonical contact form $\omega$ on $\operatorname{J}^1\left(\mathbf{1}_B\right)$ which in these coordinates reads as $\de u-\sum p_i\de x_i$. This form allows one to identify generating functions of contact fields on $\operatorname{J}^1\left(\mathbf{1}_B\right)$ with usual ones. In particular, the contact field corresponding to the constant function $1$ is $X_1=\partial/\partial u$. This make evident that the canonical projection $\operatorname{J}^1\left(\mathbf{1}_B\right)\to\operatorname{T}^\ast B$ is the symplectic quotient of $\operatorname{J}^1\left(\mathbf{1}_B\right)$ along $X_1$.

\subsection{Monge-Amp\`ere Equations}
\label{MAE}
Equations of the form
\begin{equation}
\label{MAcoordinata}
S\left(u_{xx}u_{yy}-u_{xy}^2\right)+Au_{xx}+Bu_{xy}+Cu_{yy}+D=0\;,
\end{equation}
with $u(x,y)$ being the unknown function and $S$, $A$, $B$, $C$, $D$ being functions of $x$, $y$, $u$, $u_x$, $u_y$, are usually called \emph{Monge-Amp\`ere equations} (\emph{MAE}, for short). We refer to them as \emph{classical} since this term is also used for their analogues in higher dimensions~\footnote{Sometimes the term `classical' refers, more restrictively, to the equation $\operatorname{det }\operatorname{Hess }u=1$.}. Geometrically, relation \eqref{MAcoordinata} is interpreted as a hypersurface in $\operatorname{J}^2\left(E^3,2\right)$.

Recall that \eqref{MAcoordinata} is hyperbolic (resp., parabolic, elliptic) if $\Delta>0$ (resp., $\Delta=0$, $\Delta<0$), where $\Delta:=B^2-4AC+4SD$. Equation \eqref{MAcoordinata} may be viewed as the analytical description of a geometric problem which, for hyperbolic equations is as follows.

Let $N$ be a contact manifold and $\mathbf{D}$ a two-dimensional non-Lagrangian distribution, i.e., $\mathbf{D}_x$ is not a Lagrangian subspace of $\mathcal{C}_x$ \emph{for all} $x\in N$. The geometrical problem is to find Legendrian submanifolds $L\subset N$ such that $\operatorname{T}_xL\cap\mathbf{D}_x$ is one-dimensional for all $x$. It is easy to see that this condition is equivalent to one-dimensionality of $\operatorname{T}_xL\cap\mathbf{D}_x'$, where $\mathbf{D}_x'$ stands for the $\Theta_x$-orthogonal complement of $\mathbf{D}_x$. This interpretation comes form the theory of singularities of multivalued solutions of PDE's and distinguishes MAEs by the nature of singularities their solutions admit. Recall that one of the distributions $\mathbf{D}$, $\mathbf{D}'$ for hyperbolic equation \eqref{MAcoordinata} is
\begin{equation}
\label{FormuleD}
\left\langle\; X-\frac{\sqrt{\Delta}}{2S}\frac{\partial}{\partial q}\quad,\quad Y+\frac{\sqrt{\Delta}}{2S}\frac{\partial}{\partial p}\;\right\rangle\;,
\end{equation}
while the other is
\[
\left\langle\; X+\frac{\sqrt{\Delta}}{2S}\frac{\partial}{\partial q}\quad,\quad Y-\frac{\sqrt{\Delta}}{2S}\frac{\partial}{\partial p}\;\right\rangle\;,
\]
where
\[
X:=\frac{\partial}{\partial x}\;+\;p\,\frac{\partial}{\partial u}\;-\;\frac{C}{S}\,\frac{\partial}{\partial p}\;+\;\frac{B}{2S}\,\frac{\partial}{\partial q}\;,\qquad Y:=\frac{\partial}{\partial y}\;+\;q\,\frac{\partial}{\partial u}\;+\;\frac{B}{2S}\,\frac{\partial}{\partial p}\;-\;\frac{A}{S}\,\frac{\partial}{\partial q}\;,
\]
assuming that $S\ne 0$. If \eqref{MAcoordinata} is quasilinear, i.e., $S=0$, these distributions are
\begin{multline*}
\left\langle\; X-\sqrt{\Delta}\frac{\partial}{\partial y}-q\sqrt{\Delta}\frac{\partial}{\partial u}\quad,\quad Y+\sqrt{\Delta}\frac{\partial}{\partial p}\;\right\rangle\;,\\\left\langle\; X+\sqrt{\Delta}\frac{\partial}{\partial y}+q\sqrt{\Delta}\frac{\partial}{\partial u}\quad,\quad Y-\sqrt{\Delta}\frac{\partial}{\partial p}\;\right\rangle\,,
\end{multline*}
with
\[
X:=2A\frac{\partial}{\partial x}\;+\;B\frac{\partial}{\partial y}\;+\;\left(2pA+qB\right)\frac{\partial}{\partial u}\;-\;2D\frac{\partial}{\partial p}\;,\qquad Y:=B\frac{\partial}{\partial p}\;-\;2A\frac{\partial}{\partial q}\;,
\]
assuming that $A\ne 0$. For this and further details, see \cite[n.~3.2]{MVY}.

A more satisfactory way to describe this situation is in terms of an operator $\mathbf{A}:\mathcal{C}\to\mathcal{C}$, such that
\begin{enumerate}
\item $\mathbf{A}^2=\operatorname{id}$, but $\mathbf{A}\ne\pm\operatorname{id}$;
\item\label{Sa} $\mathbf{A}$ is selfadjoint with respect to the bilinear form $\Theta$ on $\mathcal{C}$.
\end{enumerate}
Condition \autoref{Sa} means that $\Theta\left(\mathbf{A}(X),Y\right)=\Theta\left(X,\mathbf{A}(Y)\right)$, $\forall X,Y\in\mathcal{C}$. Similarly, an elliptic (resp., parabolic) MAE can be described in terms of a $\Theta$-selfadjoint operator such that $\mathbf{A}^2=-\operatorname{id}$ (resp., $\mathbf{A}^2=0$, $\mathbf{A}\ne 0$).  With such an operator $\mathbf{A}$ is associated the geometrical problem of finding Legendrian submanifolds $L\subset N$ such that $\operatorname{T}_xL$ is an invariant subspace of $\mathbf{A}_x:\mathcal{C}_x\to\mathcal{C}_x$, $\forall x\in L$. Such Legendrian submanifolds will be called \emph{$\mathbf{A}$-invariant}. Analytically, $\mathbf{A}$-invariant submanifolds are described as solutions of a MAE and vice versa. In the sequel we understand a MAE as a problem of finding $\mathbf{A}$-invariant Legendrian submanifolds for a given operator $\mathbf{A}$ of the above type. For elliptic and hyperbolic equations this operator is unique up to the sign. The hyperbolic MAE is in this sense associated with the operator $\mathbf{A}$ for which $\mathbf{D}$, $\mathbf{D}'$ are the root spaces corresponding to eigenvalues $1$, $-1$ \footnote{Operator $\mathbf{A}$ is considered in \cite{KLR} and some preceding publications in the context of effective differential forms approach, but not as a definition of MAEs.}. In this article we search for basic scalar differential invariants of such hyperbolic and elliptic MAEs which admit an infinitesimal symmetry. Such a symmetry $X$ is a (nontrivial) contact field whose flow consists of contact diffeomorphisms preserving $\mathbf{D}$ (or $\mathbf{D}'$), or, equivalenty, the operator $\mathbf{A}$. This is equivalent to $[X,\mathbf{D}]\subseteq\mathbf{D}$. The symplectic quotient along such a symmetry (locally) projects this situation onto the symplectic manifold $(M,\Omega)$ (see \hyperref[CF]{n.~\ref{CF}}). In particular, the distributions $\mathbf{D}$ and $\mathbf{D}'$ project onto distributions $\mathcal{D}$ and $\mathcal{D}'$, respectively, and Legendrian submanifolds in $N$ to Lagrangian submanifolds in $M$. In other words, the original MA problem projects to the following one: given a two-dimensional non-Lagrangian distribution $\mathcal{D}$ on a symplectic manifold $(M,\Omega)$, find Lagrangian submanifolds $L\subset M$ such that $\operatorname{T}_xL\cap\mathcal{D}_x$ is one-dimensional for all $x$.

By a symplectic hyperbolic MAE we understand analytical description of such a problem. If coefficients $S,\ldots , D$ in \eqref{MAcoordinata} do not depend on $u$, then $\partial/\partial u$ is a symmetry of this equation. Some authors refer to this situation as a symplectic Monge-Amp\`ere equation. It is worth stressing that from \eqref{MAcoordinata} it is not clear which, contact or symplectic, MAE it expresses. Accordingly, we have to distinguish contact differential invariants from symplectic ones. A natural relation between them will be explained below. Also it should be stressed that a symplectic MAE can be obtained from a contact one. Namely, consider the contactization of $M$ (see \hyperref[CF]{n.~\ref{CF}}) and observe that there is a unique bidimensional distribution $\mathbf{D}\subset\mathcal{C}$, with $\mathcal{C}$ being the contact distribution, that projects onto $\mathcal{D}$. Indeed, $\mathbf{D}_y=\left(\left.\de\pi_M\right|_{\mathcal{C}_y}\right)^{-1}(\mathcal{D}_x)$, $x=\pi(y)$.

\subsection{Bundles of Equations}
\label{BoE}
MAEs on a symplectic manifold $(M,\Omega)$ can be locally identified with sections of a trivial projective bundle $\pi: M\times\mathbb{P}^4\mapsto M$. Indeed, an independent from $u$ local representation \eqref{MAcoordinata} gives rise to a local section
\[
M\to M\times\mathbb{P}^4\;,\quad \mathbf{p}\mapsto \left(\mathbf{p},\left[S\left(\mathbf{p}\right),A\left(\mathbf{p}\right),B\left(\mathbf{p}\right),C\left(\mathbf{p}\right),D\left(\mathbf{p}\right)\right]\right)\;.
\]

In view of the interpretation of symplectic hyperbolic MAEs as pairs of distributions, it is also convenient to represent (single) $2$-distributions on $M$ by a bundle $\gamma:G\to M$ whose fiber at $\mathbf{p}\in M$ is the Grassmannian $\operatorname{G}_2\left(\operatorname{T}_{\mathbf{p}}M\right)$. This way one gets a two-fold covering of the hyperbolic open subset of $M\times\mathbb{P}^4$ by the non-Lagrangian open subset of $G$.

To introduce a convenient local chart in $\pi: M\times\mathbb{P}^4\mapsto M$ and its jet powers, we consider the standard open affine subset given by points with nonzero first projective coordinate and a canonical chart $(x,p,y,q)$ on $M$. By denoting these affine coordinates by $v^1, \ldots, v^4$, one gets a chart $\left(x,p,y,q,v^1,\ldots ,v^4\right)$ in $M\times\mathbb{P}^4$. In other words, if $\mathcal{E}$ is given by \eqref{MAcoordinata} then the corresponding local section of $\pi$ is given by
\[
\mathbf{p}\mapsto\left(\;\mathbf{p}\,,\,\frac {A(\mathbf{p})}{S(\mathbf{p})}\,,\,\frac {B(\mathbf{p})}{S(\mathbf{p})}\,,\,\frac {C(\mathbf{p})}{S(\mathbf{p})}\,,\,\frac {D(\mathbf{p})}{S(\mathbf{p})}\;\right)\;.
\]

Similarly, we define a local chart $\left(x,p,y,q,u^1,\ldots ,u^4\right)$ in $G$ in such a way the aforementioned two-fold covering is described by
\[
v^1=-u^4\;,\quad v^2=u^2+u^3\;,\quad v^3=-u^1\;,\quad v^4=u^1u^4-u^2u^3
\]
and one of its two (continuos right-) inverse maps by
\[
u^1=-v^3\;,\quad u^2=\frac{v^2-\sqrt{\Delta}}2\;,\quad u^3=\frac{v^2+\sqrt{\Delta}}2\;,\quad u^4=-v^1\;,
\]
with $\Delta:=\left(v^2\right)^2-4v^1v^3+4v^4$ (cf.~\eqref{FormuleD};  see also \cite[n.~3.3.1]{MVY}).

Scalar $k$-th order differential invariants of symplectic MAEs and $2$-distributions can be understood as functions (locally) defined on $\operatorname{J}^k(\pi)$ and $\operatorname{J}^k(\gamma)$, respectively, that are invariant under a natural action of symplectomorphisms. In the following exposition, we do not need an explicit description of this action and, so, it is omitted. If $\mathcal{I}$ is such a function and $s$ is a representing section of $\mathcal{E}$ (resp., $\mathcal{D}$), then we set
\[
\mathcal{I}_{\mathcal{E}}:=\mathcal{I}\circ\operatorname{J}^k(s)\;,\qquad\left(\text{resp., }\mathcal{I}_{\mathcal{D}}:=\mathcal{I}\circ\operatorname{J}^k(s)\right)\;.
\]
$\mathcal{I}_{\mathcal{E}}$ (resp., $\mathcal{I}_{\mathcal{D}}$) is called the \emph{value} of $\mathcal{I}$ on $\mathcal{E}$ (resp., on $\mathcal{D}$). Obviously, a differential invariant $\mathcal{I}$ can be defined by explicitly describing its values $\mathcal{I}_{\mathcal{E}}$ (resp, $\mathcal{I}_{\mathcal{D}}$). Below we follow this approach.

\section{\texorpdfstring{Differential Invariants of non-Lagrangian $2$-Distributions\\in Symplectic $4$-folds}{Differential Invariants of non-Lagrangian Two-Distributions in Symplectic Four-folds}}\label{DIDS}

Throughout this section $\left(M,\Omega\right)$ stands for a symplectic $4$-fold and $\mathcal{D}$ for a non-Lagrangian $2$-distribution on $M$. Denote by $\mathcal{D}'$ the $\Omega$-orthogonal complement of~$\mathcal{D}$. Obviously,
\begin{equation}
\label{split}
\mathcal{D}\oplus\mathcal{D}'=\D{M}\;.
\end{equation}
Denote by $P:\D{M}\to\D{M}$ and $P'=\operatorname{id}-P$ the corresponding projections onto $\mathcal{D}$ and $\mathcal{D}'$, respectively.

In this section we shall deduce some basic scalar differential invariants of the geometrical structure $\left(\Omega,\mathcal{D}\right)$ over~$M$. Observe that there is a natural bijection of differential invariants of $\left(\Omega,\mathcal{D}\right)$ and $\left(\Omega,\mathcal{D}'\right)$. Namely, with a given differential invariant $\mathcal{I}$ is naturally associated a differential invariant $\mathcal{I}'$, such that $\mathcal{I}'_{\mathcal{D}}=\mathcal{I}_{\mathcal{D}'}$. This way one gets an involution $\mathfrak{I}$ acting on differential invariants of $\left(\Omega,\mathcal{D}\right)$. We start describing some non-scalar differential invariants by means of which we shall construct some \emph{scalar} ones.

The first such invariant is the vector field
\[
Z:=P'\left(\left[X,Y\right]\right),\quad\text{with } X,Y\in\mathcal{D},\;\Omega\left(X,Y\right)=1\;.
\]

\begin{lemma}\label{ZWD}
\ 
\begin{enumerate}
\item $Z$ is well defined;
\item\label{Ovvia} $Z\in\mathcal{D}'$.
\end{enumerate}
\end{lemma}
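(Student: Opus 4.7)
The second assertion is immediate from the definition: $Z$ is the image of a vector field under the projection $P'\colon\D{M}\to\mathcal{D}'$, so $Z\in\mathcal{D}'$ by construction.

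For part (1), the plan is a standard change-of-frame argument. Locally, any pair $X,Y\in\mathcal{D}$ with $\Omega(X,Y)=1$ is automatically a local basis of $\mathcal{D}$, because pointwise non-degeneracy of $\Omega$ on $X,Y$ forces their linear independence and $\mathcal{D}$ has rank $2$. (Non-Lagrangianity guarantees that such a pair exists locally, since $\Omega|_{\mathcal{D}}$ is everywhere non-zero on the $2$-dimensional $\mathcal{D}_x$, hence non-degenerate.) Any other pair $X',Y'\in\mathcal{D}$ can therefore be written as
\[
X'=aX+bY,\qquad Y'=cX+dY,\qquad a,b,c,d\in\operatorname{C}^\infty(M),
\]
and the bilinearity of $\Omega$ together with skew-symmetry gives $\Omega(X',Y')=(ad-bc)\,\Omega(X,Y)=ad-bc$. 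The normalization $\Omega(X',Y')=1$ therefore forces $ad-bc=1$.

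Next, I would expand $[X',Y']$ using the standard identity $[fA,gB]=fg[A,B]+f\,A(g)\,B-g\,B(f)\,A$. The four bracket terms produce, after collecting, the principal contribution $(ad-bc)[X,Y]$ plus a remainder $\Xi$ which is a $\operatorname{C}^\infty(M)$-linear combination of $X$ and $Y$, namely
\[
[X',Y']=(ad-bc)[X,Y]+\Xi,\qquad \Xi\in\mathcal{D}.
\]
Applying $P'$ kills $\Xi$ (since $P'$ vanishes on $\mathcal{D}$) and leaves the factor $ad-bc=1$, so
\[
P'([X',Y'])=(ad-bc)\,P'([X,Y])=P'([X,Y]).
\]
Hence $Z$ does not depend on the chosen normalized basis $(X,Y)$, so it is a well-defined global vector field on $M$.

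There is no real obstacle here; the only point deserving care is the book-keeping that shows every Leibniz-correction term lies in $\mathcal{D}$, which follows from the fact that $\mathcal{D}$ is a $\operatorname{C}^\infty(M)$-submodule of $\D{M}$.
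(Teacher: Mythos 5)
Your proof is correct and follows essentially the same route as the paper: the paper likewise observes that $P'\left(\left[fX,Y\right]\right)=fP'\left(\left[X,Y\right]\right)$ because $P'$ annihilates $\mathcal{D}$ (your Leibniz-expansion book-keeping is just this fact written out), and then notes that any two normalized pairs are related by a unimodular matrix of smooth functions, so the determinant factor $ad-bc=1$ drops out. Your added remark that non-Lagrangianity guarantees local existence of a normalized pair is a harmless elaboration the paper leaves implicit.
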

\begin{proof}
Observe that $P'\left(\left[fX,Y\right]\right)=fP'\left(\left[X,Y\right]\right)$, since $P'(X)=0$, and, similarly, for $Y$. If $\overline{X},\overline{Y}\in\mathcal{D}$ are such that $\Omega\left(\overline{X},\overline{Y}\right)=1$, then $\overline{X}=\alpha X+\beta Y$, $\overline{Y}=\gamma X+\delta Y$, with $\left|\begin{array}{cc}\alpha&\beta\\\gamma&\delta\end{array}\right|=1$. So, the first assertion directly follows from these two facts. The second assertion is obvious.
\end{proof}

Also, put $Z':=\mathfrak{I}(Z)$, $Z'\in\mathcal{D}$. By using splitting \eqref{split} define $\omega\in\oLambda^2\left(M\right)$ by conditions
\[
\left.\omega\right|_{\mathcal{D}}=\left.\Omega\right|_{\mathcal{D}},\quad\operatorname{Ker }\omega=\mathcal{D}'\;.
\]
If $\omega'=\mathfrak{I}(\omega)$, then, obviously, $\omega'=\Omega-\omega$, i.e.,
\begin{equation}
\label{Splitfs}
\Omega=\omega+\omega'\;.
\end{equation}
\begin{lemma}\label{Ins}
\ 
\[
\omega=\frac12 P\ins\Omega\;,\quad\text{(resp., }\omega'=\frac12 P'\ins\Omega\text{ )}\;.
\]
\end{lemma}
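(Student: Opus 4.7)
The plan is to expand $P\ins\Omega$ via the insertion formula \eqref{Formula} and then verify the identity pointwise using the splitting \eqref{split}. Viewing $P\in\operatorname{\Lambda}^1(M)\otimes\D{M}$ and taking $k=1$, $r=2$ in \eqref{Formula}, the index set $S_{1,1}$ reduces to two permutations, and after using the antisymmetry of $\Omega$ the formula collapses to
\[
(P\ins\Omega)(X_1,X_2) = \Omega(P(X_1),X_2) + \Omega(X_1,P(X_2)), \qquad X_1,X_2\in\D{M}.
\]

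Both sides of the claimed identity $\omega=\tfrac{1}{2}P\ins\Omega$ are $\operatorname{C}^\infty(M)$-bilinear and alternating, so it suffices to verify it on pairs of vector fields drawn from the summands of \eqref{split}. If both arguments lie in $\mathcal{D}$, then $P$ acts as the identity on them and the right-hand side equals $2\Omega(X_1,X_2)$, which matches $2\omega(X_1,X_2)$ since $\omega|_{\mathcal{D}}=\Omega|_{\mathcal{D}}$. If both arguments lie in $\mathcal{D}'$, then $P$ kills them and $\omega$ vanishes on them because $\mathcal{D}'=\ker\omega$. In the mixed case the right-hand side reduces to a single term $\Omega(X_1,X_2)$ with one argument in $\mathcal{D}$ and the other in its $\Omega$-orthogonal complement $\mathcal{D}'$, hence vanishes; the left-hand side vanishes for the same $\ker$-reason. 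For the parenthetical claim about $\omega'$, the cleanest route is to apply the involution $\mathfrak{I}$ to the identity just proved, since $\mathfrak{I}$ swaps $\mathcal{D}\leftrightarrow\mathcal{D}'$ and hence $P\leftrightarrow P'$. Alternatively, the same expansion applied to $\operatorname{id}=P+P'$ gives $\tfrac{1}{2}\operatorname{id}\ins\Omega=\Omega$, and subtracting the identity for $\omega$ together with \eqref{Splitfs} yields the one for $\omega'$.

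There is no real obstacle: the only point requiring care is the correct unpacking of \eqref{Formula}, namely that the sign from the transposition in $S_{1,1}$ combined with the antisymmetry of $\Omega$ makes the two terms \emph{add} rather than cancel, which is exactly what forces the overall factor of $\tfrac{1}{2}$.
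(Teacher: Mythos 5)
Your proposal is correct and takes essentially the same route as the paper: both expand $P\ins\Omega$ via \eqref{Formula} to obtain $\left(P\ins\Omega\right)(X,Y)=\Omega\left(P(X),Y\right)+\Omega\left(X,P(Y)\right)$ and then check that $\frac12 P\ins\Omega$ satisfies the defining conditions of $\omega$, with your case analysis over the splitting \eqref{split} merely spelling out what the paper leaves as ``easily implies.'' Your treatment of $\omega'$ via the involution $\mathfrak{I}$ (or via $\operatorname{id}=P+P'$) is a harmless elaboration of the paper's parenthetical ``resp.''
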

\begin{proof}
According to \eqref{Formula} we have
\[
\left(P\ins\Omega\right)\left(X,Y\right)=\Omega\left(P(X),Y\right)+\Omega\left(X,P(Y)\right)\;.
\]
This easily implies that $\frac12 P\ins\Omega$ satisfies the defining conditions of $\omega$.
\end{proof}

Define the \emph{curvature} $\mathcal{R}$ of $\mathcal{D}$ by
\[
\mathcal{R}\left(X,Y\right):=P'\left(\left[P(X),P(Y)\right]\right)\;,\quad X,Y\in\D{M}\;.
\]
Obviously, $\mathcal{R}':=\mathfrak{I}(\mathcal{R})$ is the curvature of $\mathcal{D}'$.

\begin{lemma}\label{ReZ}
\ 
\begin{enumerate}
\item $\mathcal{R}$ (resp., $\mathcal{R}'$) is skew-symmetric and $\operatorname{C}^\infty(M)$-bilinear; 
\item\label{Decomp} $\mathcal{R}=\omega\otimes Z$ (resp., $\mathcal{R}'=\omega'\otimes Z'$);
\item $\left(\mathcal{R}\ins\Omega\right)\left(X,Y,V\right)=\Omega\left(\mathcal{R}\left(X,Y\right),V\right)+\Omega\left(\mathcal{R}\left(V,X\right),Y\right)+\Omega\left(\mathcal{R}\left(Y,V\right),X\right)$.
\end{enumerate}
\end{lemma}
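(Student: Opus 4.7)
The plan is to handle the three assertions in order, using (1) to enable (2), and the general insertion formula \eqref{Formula} to settle (3).

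For (1), skew-symmetry of $\mathcal{R}$ follows at once from the skew-symmetry of the Lie bracket together with the $\operatorname{C}^\infty(M)$-linearity of $P'$. For $\operatorname{C}^\infty(M)$-bilinearity, I would expand $[P(fX),P(Y)]=[fP(X),P(Y)]$ by the Leibniz rule: this produces $f[P(X),P(Y)]$ plus a correction $-(P(Y)(f))\,P(X)$ that lies in $\mathcal{D}$ and is therefore annihilated by $P'$. Linearity in the second slot will then follow by skew-symmetry.

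For (2), once (1) is in hand, $\mathcal{R}$ is a genuine vector-valued $2$-form taking values in $\mathcal{D}'$, and depending only on the $\mathcal{D}$-components of its arguments (since $P\circ P'=0$). I would fix locally $X_0,Y_0\in\mathcal{D}$ with $\Omega(X_0,Y_0)=1$, so that $\mathcal{R}(X_0,Y_0)=Z$ by definition. For arbitrary $X,Y\in\D{M}$, writing $P(X)=aX_0+bY_0$, $P(Y)=cX_0+dY_0$ and invoking bilinearity yields $\mathcal{R}(X,Y)=(ad-bc)Z$. Since $\omega$ annihilates $\mathcal{D}'$ and restricts to $\Omega$ on $\mathcal{D}$, one also has $\omega(X,Y)=\Omega(P(X),P(Y))=ad-bc$; hence $\mathcal{R}=\omega\otimes Z$. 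The parallel identity for $\mathcal{R}'$ follows by applying $\mathfrak{I}$.

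Part (3) should be pure bookkeeping: with $k=r=2$ in \eqref{Formula}, the set $S_{2,1}$ has three elements, one of which contributes with a negative sign; using the skew-symmetry from (1) on that term converts it into the missing entry of the cyclic sum and yields the claimed formula.

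The only place where real computation happens is the Leibniz step in (1); this is also what makes $\mathcal{R}$ tensorial and thus enables the rank-one decomposition in (2). Past that point, (2) and (3) are essentially formal.
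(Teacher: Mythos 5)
Your proposal is correct and follows essentially the same route as the paper's proof: part (1) via the Leibniz rule with $P'$ annihilating the $\mathcal{D}$-valued correction term, part (2) by reducing $\mathcal{R}=\omega\otimes Z$ through $\operatorname{C}^\infty(M)$-bilinearity to a normalized pair in $\mathcal{D}$ (the paper phrases this as a case check on arguments from $\mathcal{D}$ and $\mathcal{D}'$ rather than your frame expansion $P(X)=aX_0+bY_0$, a purely cosmetic difference), and part (3) by unwinding \eqref{Formula} with $k=r=2$ and using skew-symmetry to turn the signed shuffle term into the cyclic sum. No gaps.
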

\begin{proof}
\begin{enumerate}
\item Skew-symmetry is obvious. $\operatorname{C}^\infty(M)$-bilinearity is as in the proof of~\autoref{ZWD}.
\item When $X,Y\in\mathcal{D}$ and $\Omega(X,Y)=1$ we have
\begin{multline*}
\mathcal{R}\left(X,Y\right)=P'\left(\left[P(X),P(Y)\right]\right)=P'\left(\left[X,Y\right]\right)=Z=\Omega(X,Y)Z=\omega(X,Y)Z\\=\left(\omega\otimes Z\right)(X,Y)\;;
\end{multline*}
moreover, the equality is trivial when $X,Y\in\mathcal{D}$ and $\Omega(X,Y)=0$.

When $X\in\mathcal{D}'$ we have
\[
\mathcal{R}\left(X,Y\right)=P'\left(\left[P(X),P(Y)\right]\right)=P'\left(\left[0,Y\right]\right)=0
\]
and
\[
\left(\omega\otimes Z\right)(X,Y)=\omega(X,Y)Z=0\;.
\]

Now, the general result easily follows by from these two facts, $\operatorname{C}^\infty(M)$--linearity and the splitting $\mathcal{D}\oplus\mathcal{D}'=\D{M}$.
\item Straightforwardly from \eqref{Formula} (\hyperref[I]{n.~\ref{I}}).
\end{enumerate}
\end{proof}

\begin{lemma}\label{Cancel}
\ 
\[
\omega\wedge\left(Z'\ins\omega\right)=\omega'\wedge\left(Z\ins\omega'\right)=0\;.
\]
\end{lemma}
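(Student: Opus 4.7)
The plan is to exploit the observation that both $\omega$ and $Z'\ins\omega$ vanish as soon as one of their arguments lies in $\mathcal{D}'$. In a local frame adapted to the splitting~\eqref{split}, this means that both forms live in the exterior subalgebra generated by just two $1$-forms, so their wedge product---formally a $3$-form in that subalgebra---must collapse for degree reasons.

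First I would spell out what the defining properties of $\omega$ give: the conditions $\omega|_\mathcal{D}=\Omega|_\mathcal{D}$ and $\operatorname{Ker}\omega=\mathcal{D}'$ say precisely that $\omega(V,W)=0$ whenever $V\in\mathcal{D}'$, and by skew-symmetry the same holds when $W\in\mathcal{D}'$. Consequently
\[
(Z'\ins\omega)(V)=\omega(Z',V)=0\qquad\forall\,V\in\mathcal{D}',
\]
so $Z'\ins\omega$ also annihilates $\mathcal{D}'$ as a $1$-form (notice that we do not even need $Z'\in\mathcal{D}$ for this step).

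Then, working locally in a frame $X_1,X_2$ of $\mathcal{D}$ and $X_3,X_4$ of $\mathcal{D}'$ with dual coframe $\theta^1,\theta^2,\theta^3,\theta^4$, the vanishing properties just established force
\[
\omega=c\,\theta^1\wedge\theta^2\qquad\text{and}\qquad Z'\ins\omega=a\,\theta^1+b\,\theta^2
\]
for some local functions $a,b,c$, whence
\[
\omega\wedge(Z'\ins\omega)=c\,\theta^1\wedge\theta^2\wedge(a\,\theta^1+b\,\theta^2)=0.
\]
The second identity $\omega'\wedge(Z\ins\omega')=0$ follows by applying the involution $\mathfrak{I}$, which swaps the roles of $\mathcal{D}$ and $\mathcal{D}'$ and correspondingly $\omega\leftrightarrow\omega'$, $Z\leftrightarrow Z'$. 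There is no substantive obstacle in the argument: once the vanishing on $\mathcal{D}'$ is recorded, the statement reduces to a pigeonhole count on exterior degree versus the dimension of the ambient subspace, and the only care required is the bookkeeping with the splitting~\eqref{Splitfs}.
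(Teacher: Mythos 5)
Your proof is correct, and it takes a slightly different route from the paper's. Where you pass to a frame adapted to the splitting \eqref{split} and observe that $\omega$ and $Z'\ins\omega$ both lie in the exterior subalgebra generated by the two coframe elements $\theta^1,\theta^2$ dual to $\mathcal{D}$ --- so that their wedge product dies by a degree count in a rank-two exterior algebra --- the paper argues coordinate-freely: $\omega$ is a degenerate $2$-form on a $4$-fold, hence $\omega^2=0$ (a degenerate $n$-form on a $2n$-dimensional manifold squares to zero), and then for \emph{any} vector field $X$ the antiderivation property of insertion gives $\omega\wedge\left(X\ins\omega\right)=X\ins\frac12\omega^2=0$. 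The two arguments rest on the same underlying fact --- a $2$-form on a $4$-fold with $2$-dimensional kernel is decomposable, which is exactly what your identity $\omega=c\,\theta^1\wedge\theta^2$ makes explicit --- and both prove more than the statement requires, since (as you note) neither uses $Z'\in\mathcal{D}$. What your version buys is concreteness: the vanishing is visible by inspection, with no appeal to the square of $\omega$. What the paper's version buys is brevity and frame-independence: no choice of local frame, and a reusable identity $\left(X\ins\alpha\right)\wedge\alpha=X\ins\frac12\alpha^2$ valid for arbitrary $X$. Your handling of the second identity via the involution $\mathfrak{I}$ is also fine; the paper gets it for free because its computation applies verbatim to $\omega'$.
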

\begin{proof}
The $2$-forms $\omega$ and $\omega'$ are degenerate. Observe that a degenerate $n$-form $\alpha$ on a $2n$-dimensional manifold squares to zero. Hence for all vector fields $X$, we have $\left(X\ins\alpha\right)\wedge\alpha=X\ins\frac 12\alpha^2=0$.
\end{proof}
\begin{lemma}\label{Pre5}
\ 
\[
\left(\mathcal{R}-\mathcal{R}'\right)\ins\Omega=\left(Z-Z'\right)\ins\frac 12\Omega^2\;.
\]
\end{lemma}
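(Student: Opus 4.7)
The plan is to reduce both sides of the identity to the common expression
\[
\omega\wedge(Z\ins\omega') \;-\; \omega'\wedge(Z'\ins\omega),
\]
by means of the decomposition $\mathcal{R}=\omega\otimes Z$ (\autoref{ReZ}), the splitting $\Omega=\omega+\omega'$ in \eqref{Splitfs}, and the containments $Z\in\mathcal{D}'=\ker\omega$ and $Z'\in\mathcal{D}=\ker\omega'$ coming from \autoref{ZWD} together with the involution $\mathfrak{I}$.

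For the left-hand side I would first apply the rule $(\alpha\otimes X)\ins\beta=\alpha\wedge(X\ins\beta)$ from \hyperref[I]{n.~\ref{I}} to rewrite $\mathcal{R}\ins\Omega=\omega\wedge(Z\ins\Omega)$ and $\mathcal{R}'\ins\Omega=\omega'\wedge(Z'\ins\Omega)$. Since $Z\ins\omega=0$, decomposing $\Omega=\omega+\omega'$ gives $Z\ins\Omega=Z\ins\omega'$; symmetrically, $Z'\ins\Omega=Z'\ins\omega$. Substituting produces exactly the common expression above.

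For the right-hand side I would first note that $\omega$ and $\omega'$ are degenerate $2$-forms on the $4$-fold $M$, so $\omega^2=(\omega')^2=0$ (the observation already used in the proof of \autoref{Cancel}); hence $\Omega^2=2\,\omega\wedge\omega'$ and $\frac12\Omega^2=\omega\wedge\omega'$. The Leibniz rule for insertion of a vector field into a wedge product, combined with $Z\ins\omega=0$ and $Z'\ins\omega'=0$, yields
\[
Z\ins(\omega\wedge\omega')=\omega\wedge(Z\ins\omega'),\qquad Z'\ins(\omega\wedge\omega')=(Z'\ins\omega)\wedge\omega'=\omega'\wedge(Z'\ins\omega),
\]
the last equality by the commutation sign $(-1)^{1\cdot 2}=+1$. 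Subtracting recovers the same common expression and identifies the two sides.

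I do not anticipate any real difficulty: once the two preparatory facts $Z\in\ker\omega$ and $\omega^2=0$ are in hand, the identity collapses to routine Leibniz bookkeeping together with a sign check. The only mild pitfall is remembering that the degeneracy of $\omega$ and $\omega'$ is what makes $\tfrac12\Omega^2$ reduce cleanly to the single cross term $\omega\wedge\omega'$, which is exactly the step that brings the right-hand side into a form comparable with the left-hand side.
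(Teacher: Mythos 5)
Your proof is correct, and it reorganizes the paper's argument along a genuinely different route. The paper proves the identity by forming the difference of the two sides, expanding $\left(Z-Z'\right)\ins\frac12\Omega^2$ as $\Omega\wedge\left(\left(Z-Z'\right)\ins\Omega\right)$, and whittling the difference down via the splitting \eqref{Splitfs} together with \autoref{Cancel} (that is, $\omega\wedge\left(Z'\ins\omega\right)=\omega'\wedge\left(Z\ins\omega'\right)=0$), until only $\omega'\wedge\left(Z\ins\omega\right)-\omega\wedge\left(Z'\ins\omega'\right)$ remains, which vanishes by the kernel conditions. You instead reduce each side separately to the common normal form $\omega\wedge\left(Z\ins\omega'\right)-\omega'\wedge\left(Z'\ins\omega\right)$, the key new ingredient being the factorization $\frac12\Omega^2=\omega\wedge\omega'$, deduced from $\omega^2=\left(\omega'\right)^2=0$ --- which is precisely the degeneracy observation on which the paper's proof of \autoref{Cancel} rests, used directly rather than through that lemma. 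Your bookkeeping checks out: the Leibniz rule carries no sign past the degree-two form $\omega$, and $\left(Z'\ins\omega\right)\wedge\omega'=\omega'\wedge\left(Z'\ins\omega\right)$ since the commutation exponent $1\cdot 2$ is even; the needed containments $Z\in\operatorname{Ker}\omega$, $Z'\in\operatorname{Ker}\omega'$ are exactly \autoref{ZWD} plus the involution $\mathfrak{I}$, and $\mathcal{R}\ins\Omega=\omega\wedge\left(Z\ins\Omega\right)$ is \autoref{ReZ} combined with the insertion rule for decomposable vector-valued forms from \hyperref[I]{n.~\ref{I}}. What your route buys is symmetry and transparency: the identity $\frac12\Omega^2=\omega\wedge\omega'$ isolates the geometric content in a clean, reusable statement and makes \autoref{Cancel} superfluous here --- and since \autoref{Cancel} is invoked in the paper only in this proof, your argument effectively inlines it. What the paper's route buys is that it never decomposes $\Omega^2$ at all, manipulating only insertions into $\Omega$ itself, which keeps the computation closer to the formalism it has just set up.
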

\begin{proof}
First, we have
\begin{multline*}
\left(Z-Z'\right)\ins\frac 12\Omega^2-\left(\mathcal{R}-\mathcal{R}'\right)\ins\Omega\\\overset{\autoref{ReZ},\ \autoref{Decomp}}{=}\;\Omega\wedge\left(\left(Z-Z'\right)\ins\Omega\right)-\left(\omega\otimes Z-\omega'\otimes Z'\right)\ins\Omega\\
=\Omega\wedge\left(Z\ins\Omega\right)-\Omega\wedge\left(Z'\ins\Omega\right)-\omega\wedge\left(Z\ins\Omega\right)+\omega'\wedge\left(Z'\ins\Omega\right)
\overset{\eqref{Splitfs}}{=}\;\omega'\wedge\left(Z\ins\Omega\right)-\omega\wedge\left(Z'\ins\Omega\right)\\
\overset{\eqref{Splitfs},\,\autoref{Cancel}}{=}\;\omega'\wedge\left(Z\ins \omega\right)-\omega\wedge\left(Z'\ins \omega'\right)\;.
\end{multline*}
But by \autoref{ZWD}, \autoref{Ovvia}, $Z\ins \omega=0$, and, similarly, $Z'\ins \omega'=0$.
\end{proof}

\begin{prop}\label{N5}
\ 
\[
\left(Z-Z'\right)\ins\frac 12\Omega^2=\de\omega\;.
\]
\end{prop}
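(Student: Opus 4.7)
The plan is to use \autoref{Pre5} to reduce the claim to the identity
\[
\de\omega=\left(\mathcal{R}-\mathcal{R}'\right)\ins\Omega,
\]
which I then verify by a short case analysis. Both sides are alternating $\operatorname{C}^\infty(M)$-trilinear (the left-hand side is a $3$-form; the right-hand side combines the $\operatorname{C}^\infty(M)$-bilinearity of $\mathcal{R}$, $\mathcal{R}'$ with the explicit formula in \autoref{ReZ}), so it suffices to test the equality on triples $(X,Y,V)$ with each entry lying entirely in $\mathcal{D}$ or entirely in $\mathcal{D}'$. The two ``pure'' cases are immediate: if $X,Y,V\in\mathcal{D}$ then $\mathcal{R}'(X,Y)=0$ while $\mathcal{R}(X,Y)\in\mathcal{D}'$ is $\Omega$-orthogonal to $V$, killing the right-hand side; on the left, $\omega|_{\mathcal{D}}=\Omega|_{\mathcal{D}}$ together with $\Omega(\mathcal{D},\mathcal{D}')=0$ lets one replace every $\omega$ by $\Omega$, so $\de\omega(X,Y,V)=\de\Omega(X,Y,V)=0$. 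The case $X,Y,V\in\mathcal{D}'$ is symmetric.

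The substantive work is the mixed case, say $X,Y\in\mathcal{D}$ and $V\in\mathcal{D}'$. Expanding $\de\omega(X,Y,V)$ via Cartan's formula, four of the six terms vanish because $V\in\mathcal{D}'=\ker\omega$, and in the two surviving terms the brackets $[X,V]$, $[Y,V]$ pair with $Y,X\in\mathcal{D}$ through $\omega$, which can be replaced by $\Omega$ using $\omega|_{\mathcal{D}}=\Omega|_{\mathcal{D}}$ and $\Omega(\mathcal{D},\mathcal{D}')=0$. This leaves
\[
\de\omega(X,Y,V)=V\cdot\Omega(X,Y)+\Omega\left([X,V],Y\right)-\Omega\left([Y,V],X\right).
\]
Now impose $\de\Omega(X,Y,V)=0$, noting that the terms $X\Omega(Y,V)$ and $Y\Omega(X,V)$ drop out because $\Omega$ vanishes on $\mathcal{D}\times\mathcal{D}'$; the entire right-hand side collapses to $\Omega([X,Y],V)=\Omega(P'([X,Y]),V)=\Omega(\mathcal{R}(X,Y),V)$. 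On the other side of the identity, expanding $(\mathcal{R}-\mathcal{R}')\ins\Omega(X,Y,V)$ via \autoref{ReZ}, every curvature entry except $\mathcal{R}(X,Y)$ is zero (its arguments force a $P$ or $P'$ of the wrong summand), and we obtain the same value. The symmetric mixed case $X\in\mathcal{D}$, $Y,V\in\mathcal{D}'$ is handled by a mirror argument and produces $-\Omega(\mathcal{R}'(Y,V),X)$ on both sides.

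The one nontrivial step is precisely this mixed case: it is the only one in which closedness of $\Omega$ must actually be invoked to resurrect the bracket $[X,Y]$, and the bookkeeping of which Cartan terms are killed by $\ker\omega=\mathcal{D}'$ versus which are cleaned up by $\omega|_{\mathcal{D}}=\Omega|_{\mathcal{D}}$ has to be done with some care. Everything else is a sign check and an appeal to the $\Omega$-orthogonal splitting $\D{M}=\mathcal{D}\oplus\mathcal{D}'$.
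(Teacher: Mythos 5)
Your proof is correct, and it takes a genuinely different route from the paper's. You invoke \autoref{Pre5} to convert the claim into the frame-free tensorial identity $\de\omega=\left(\mathcal{R}-\mathcal{R}'\right)\ins\Omega$, which you then verify by testing on triples adapted to the splitting $\D{M}=\mathcal{D}\oplus\mathcal{D}'$, using Cartan's formula and the closedness of $\Omega$ in the mixed case $X,Y\in\mathcal{D}$, $V\in\mathcal{D}'$; this is legitimate, since \autoref{Pre5} is proved in the paper before and independently of \autoref{N5}, so there is no circularity. The paper instead argues directly: it chooses local spanning fields $X,Y$ of $\mathcal{D}$ normalized by $\Omega\left(X,Y\right)=1$ (and similarly $X',Y'$ for $\mathcal{D}'$), computes the double contraction of both sides via the chain $Z\ins\omega'=\left[X,Y\right]\ins\omega'=X\ins\mathcal{L}_Y\omega'=X\ins\left(Y\ins\de\omega'\right)=Y\ins\left(X\ins\de\omega\right)$, repeats the computation for $X',Y'$, and concludes by observing that any three distinct fields from the frame $X,Y,X',Y'$ must contain either the pair $X,Y$ or the pair $X',Y'$. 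What your route buys is a coordinate- and normalization-free argument that, as a byproduct, isolates the identity $\de\omega=\left(\mathcal{R}-\mathcal{R}'\right)\ins\Omega$, which the paper only obtains a posteriori by combining \autoref{Pre5} with \autoref{N5} (it is what underlies description \autoref{Dausare} of \autoref{DescrI1} and the relation ${\mathcal{I}^1}'=\mathcal{I}^1$); what the paper's route buys is the avoidance of the Cartan-formula case bookkeeping, at the cost of a frame choice and the final basis-counting step. One prose slip worth fixing: in your main mixed case three, not four, of the six Cartan terms vanish on account of $V\in\ker\omega$ (namely $X\,\omega(Y,V)$, $Y\,\omega(X,V)$ and $\omega\left(\left[X,Y\right],V\right)$), so three terms survive rather than two; your displayed formula correctly retains all three, so the mathematics is unaffected.
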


\begin{proof}
It suffices to prove this formula for open subsets $U$ such that $\left.\mathcal{D}\right|_{U}$ admits spanning vector fields $X,Y$. These can be normalized to
\begin{equation}
\label{Normalizzati}
\Omega\left(X,Y\right)=1\;.
\end{equation}
Also make a similar choice of vector fields $X',Y'$ for $\mathcal{D}'$.

Since $X,Y,Z'\in\mathcal{D}$ and $\mathcal{D}$ is $2$-dimensional, $X,Y,Z'\in\mathcal{D}$ are $\operatorname{C}^\infty(M)$--dependent. So,
\[
Y\ins\left(X\ins\left(\left(Z-Z'\right)\ins\frac 12\Omega^2\right)\right)=Y\ins\left(X\ins\left(Z\ins\frac 12\Omega^2\right)\right)\;.
\]
Taking into account \eqref{Splitfs} and the fact that $X,Y\in\operatorname{Ker}\omega'$, $Z\in\operatorname{Ker}\omega$, we have
\begin{multline*}
Y\ins\left(X\ins\left(Z\ins\frac 12\Omega^2\right)\right)=Y\ins\left(X\ins\left(\Omega\wedge\left(Z\ins\Omega\right)\right)\right)=\left(Y\ins\left(X\ins\Omega\right)\right)\wedge\left(Z\ins\omega'\right)\\
\overset{\eqref{Normalizzati}}{=}Z\ins\omega'\;.
\end{multline*}
Taking into account that $X$, $Y$ and $\left[X,Y\right]-Z$ belong to the kernel of $\omega'$, we have
\begin{multline*}
Z\ins\omega'=\left[X,Y\right]\ins\omega'=X\ins\mathcal{L}_Y\omega'-\mathcal{L}_Y\left(X\ins\omega'\right)=X\ins\mathcal{L}_Y\omega'\\
=X\ins\left(Y\ins\de\omega'\right)+X\ins\de\left(Y\ins\omega'\right)=X\ins\left(Y\ins \de\omega'\right)
\overset{\eqref{Splitfs}}{=}Y\ins\left(X\ins\de\omega\right)\;.
\end{multline*}
Thus we have shown that
\begin{equation}
\label{Prima}
Y\ins\left(X\ins\left(\left(Z-Z'\right)\ins\frac 12\Omega^2\right)\right)=Y\ins\left(X\ins \de\omega\right)\;.
\end{equation}

Similar arguments applied to $\mathcal{D}'$, $X'$ and $Y'$ give
\[
Y'\ins\left(X'\ins\left(\left(Z'-Z\right)\ins\frac 12\Omega^2\right)\right)=Y'\ins\left(X'\ins \de\omega'\right)\;,
\]
or, equivalently,
\begin{equation}
\label{Seconda}
Y'\ins\left(X'\ins\left(\left(Z-Z'\right)\ins\frac 12\Omega^2\right)\right)=Y'\ins\left(X'\ins \de\omega\right)\;.
\end{equation}

Identities~\eqref{Prima} and~\eqref{Seconda} imply the required identity, because the insertion into a $3$-form (namely, $\left(Z'-Z\right)\ins\frac 12\Omega^2$ in the considered situation) of any three (distinct) vector fields chosen from the basis $X,Y,X',Y'$ involve either insertions of $X$ and $Y$, or insertions of~$X'$ and~$Y'$.
\end{proof}

The following invariant differential forms
\[
\rho:=\Gamma(Z)=Z\ins\Omega\,,\qquad\rho':=\Gamma\left(Z'\right)=Z'\ins\Omega\,,\qquad\sigma:=\rho-\rho'\,
\]
will be used in our construction of scalar differential invariants.

We have constructed the following invariants $1$- and $2$-forms: $\rho$, $\rho'$, $\omega$, $\omega'$. Now, by making use of them, it is not difficult to construct a series of scalar differential invariants. Namely, if $\tau$ and $\tau'$ are invariant $1$-forms and $\Theta$, $\Theta'$ are invariant $2$-forms, then
\[
\ast\left(\Theta\wedge\Theta'\right)\,,\;\ast\left(\tau\wedge\tau'\wedge\Theta\right)\,,\;\ast\left(\de\tau\wedge\Theta\right)\,,\;\ast\left(\tau\wedge\de\Theta\right)\,,\;\ast\left(\de\tau\wedge\de\tau'\right)\,,\;\text{etc. }
\]
are, obviously, scalar differential invariants. However, in the considered context, the so obtained invariants are not independent. Below we shall choose, in a sense, more simple ones. The simplest of them is
\[
\mathcal{I}^1_{\mathcal D}:=\ast\left(\omega\wedge\de\sigma\right)\;.
\]
This invariant has alternative useful descriptions.
\begin{lemma}\label{DescrI1}
\ 
\begin{enumerate}
\item\label{Start} $\mathcal{I}^1_{\mathcal D}=\ast\de\left(\omega\wedge\sigma\right)$;
\item\label{Cont} $\mathcal{I}^1_{\mathcal D}=\ast\de\left(\omega\wedge\rho\right)$;
\item\label{Ponte} $\mathcal{I}^1_{\mathcal D}=\ast\left(\Omega\wedge\de\rho\right)$.
\item \label{Dausare}$\mathcal{I}^1_{\mathcal D}=\ast\de\left(\mathcal{R}\ins\Omega\right)$;
\end{enumerate}
\end{lemma}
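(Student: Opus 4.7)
The plan centres on reformulating \autoref{N5} as the identity $\de\omega = \Omega\wedge\sigma$. To derive this, I would use the Leibniz rule for the interior product on a single vector field: for any $W\in\D{M}$, the two terms in $W\ins\Omega^2$ coincide since a $1$-form and a $2$-form commute under $\wedge$, so $W\ins\tfrac12\Omega^2 = \Omega\wedge(W\ins\Omega) = \Omega\wedge\Gamma(W)$. Applied to $W = Z-Z'$ this turns \autoref{N5} into $\de\omega = \Omega\wedge\sigma$.

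Next I would isolate two vanishing consequences of \autoref{Cancel}. Since $Z\in\mathcal{D}' = \operatorname{Ker}\omega$ by \autoref{ZWD}, \autoref{Ovvia}, one has $\rho = Z\ins\Omega = Z\ins\omega'$, and symmetrically $\rho' = Z'\ins\omega$. Then \autoref{Cancel} reads $\omega\wedge\rho' = 0$ and $\omega'\wedge\rho = 0$, from which
\[
\omega\wedge\sigma \;=\; \omega\wedge\rho \;=\; (\Omega-\omega')\wedge\rho \;=\; \Omega\wedge\rho\,.
\]

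With these inputs, \autoref{Start} follows by expansion: $\de(\omega\wedge\sigma) = \de\omega\wedge\sigma + \omega\wedge\de\sigma = \Omega\wedge\sigma\wedge\sigma + \omega\wedge\de\sigma = \omega\wedge\de\sigma$, because $\sigma\wedge\sigma = 0$. Then \autoref{Cont} is immediate from $\omega\wedge\sigma = \omega\wedge\rho$, and \autoref{Ponte} follows by applying $\de$ to $\omega\wedge\rho = \Omega\wedge\rho$ and using $\de\Omega = 0$. Finally, \autoref{ReZ}, \autoref{Decomp} combined with the rule $(\omega\otimes Z)\ins\Omega = \omega\wedge(Z\ins\Omega)$ recalled in \hyperref[I]{n.~\ref{I}} yields $\mathcal{R}\ins\Omega = \omega\wedge\rho$, so \autoref{Dausare} reduces to \autoref{Cont}.

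Every step is short algebra once $\de\omega = \Omega\wedge\sigma$ and the vanishings $\omega\wedge\rho' = \omega'\wedge\rho = 0$ are in hand, so there is no real obstacle. The only mild trick is to perform the collapse $\omega\wedge\sigma\leadsto\omega\wedge\rho\leadsto\Omega\wedge\rho$ \emph{before} differentiating, rather than attempting to handle $\de\sigma$ by brute force.
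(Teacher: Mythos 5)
Your proposal is correct and takes essentially the same route as the paper's proof: the same Leibniz expansion $\de(\omega\wedge\sigma)=\Omega\wedge\sigma\wedge\sigma+\omega\wedge\de\sigma$ based on rewriting \autoref{N5} as $\de\omega=\Omega\wedge\sigma$, the same reduction chain through the vanishings $\omega\wedge\rho'=0$ and $\omega'\wedge\rho=0$, and the identity $\mathcal{R}\ins\Omega=\omega\wedge\rho$ from \autoref{ReZ} for the last item. The only cosmetic difference is that you deduce the two vanishings from \autoref{Cancel} via $\rho=Z\ins\omega'$ and $\rho'=Z'\ins\omega$, whereas the paper observes directly that the kernels of $\omega$ and $\rho'$ (resp.\ of $\omega'$ and $\rho$) both contain the $2$-distribution $\mathcal{D}'$ (resp.\ $\mathcal{D}$); both justifications are valid.
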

\begin{proof}
\begin{enumerate}
\item
We have
\[
\de\left(\omega\wedge\sigma\right)=\de\omega\wedge\sigma+\omega\wedge\de\sigma\overset{\autoref{N5}}{=}\Omega\wedge\sigma\wedge\sigma+\omega\wedge\de\sigma=\omega\wedge\de\sigma\;.
\]
Hence $\mathcal{I}^1_{\mathcal D}=\ast\left(\omega\wedge\de\sigma\right)=\ast\de\left(\omega\wedge\sigma\right)$.
\item
Note that $\omega\wedge\rho'=0$, since the kernels of $\omega$ and $\rho'$ both contain the $2$-distribution $\mathcal{D}'$. Then
\[
\de\left(\omega\wedge\rho\right)=\de\left(\omega\wedge\sigma\right)\;,
\]
and the result follows from \autoref{Start}.
\item
Similarly, $\omega'\wedge\rho=0$. Hence
\[
\de\left(\omega\wedge\rho\right)\overset{\eqref{Splitfs}}{=}\de\left(\Omega\wedge\rho\right)=\Omega\wedge\de\rho\;,
\]
and the result follows from \autoref{Cont}.
\item
By \autoref{ReZ},\ \autoref{Decomp}, we have
\[
\mathcal{R}\ins\Omega=\omega\wedge\rho\;,
\]
and the result follows from \autoref{Cont}.
\end{enumerate}
\end{proof}

By rewriting the identity of \autoref{N5} as $\Omega\wedge\sigma=\de\omega$, we have $\Omega\wedge\de\sigma=\de(\Omega\wedge\sigma)=0$. Hence $\Omega\wedge\de\rho=\Omega\wedge\de\rho'$. Thus the description \autoref{Ponte} above, and consequently all the others, still hold when replacing $\omega$, $\rho$, $\mathcal{R}$ by their counterparts $\omega'$, $\rho'$, $\mathcal{R}'$ through $\mathfrak{I}$.

Other scalar differential invariants we shall deal with are
\begin{gather*}
\mathcal{I}^2_{\mathcal{D}}:=\ast\left(\sigma\wedge\rho\wedge\de\sigma\right)\;,\quad\mathcal{I}^3_{\mathcal{D}}:=\ast\left(\sigma\wedge\rho\wedge\de\rho\right)\;,\\
\mathcal{I}^4_{\mathcal{D}}:=\ast\left(\left(\de\sigma\right)^2\right)\;,\quad\mathcal{I}^5_{\mathcal{D}}:=\ast\left(\de\sigma\wedge\de\rho\right)\;,\quad\mathcal{I}^6_{\mathcal{D}}:=\ast\left(\left(\de\rho\right)^2\right)\;,\\
\mathcal{I}^7_{\mathcal{D}}:=\ast\left(\sigma\,\wedge\,\de\rho\,\wedge\,\ast\left(\sigma\wedge\de\sigma\right)\right)\;,\quad\mathcal{I}^8_{\mathcal{D}}:=\ast\left(\sigma\,\wedge\,\de\rho\,\wedge\,\ast\left(\rho\wedge\de\sigma\right)\right)\;,\\
\quad\mathcal{I}^9_{\mathcal{D}}:=\ast\left(\sigma\,\wedge\,\de\sigma\,\wedge\,\ast\left(\rho\wedge\de\rho\right)\right)\;.
\end{gather*}

It is worth noticing that obvious differential invariants $Z'\ins\rho$, $Z\ins\rho'$, and similar, are trivial.

\begin{lemma}
\ 
\begin{enumerate}
\item\label{F1} $\mathcal{I}^2_{\mathcal{D}}=\ast\left(\sigma\wedge\rho'\wedge\de\sigma\right)=\ast\left(\rho\wedge\rho'\wedge\de\sigma\right)$;
\item\label{F2}$\mathcal{I}^2_{\mathcal{D}}=-\de\sigma\left(Z,Z'\right)$;
\item\label{F3} $\mathcal{I}^3_{\mathcal{D}}=\ast\left(\sigma\wedge\rho'\wedge\de\rho\right)=\ast\left(\rho\wedge\rho'\wedge\de\rho\right)$;
\item\label{F4}$\mathcal{I}^3_{\mathcal{D}}=-\de\rho\left(Z,Z'\right)$.
\end{enumerate}
\end{lemma}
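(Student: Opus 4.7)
The four items split into two pairs: items~(1)--(2) involve $\de\sigma$ while items~(3)--(4) involve $\de\rho$, but both pairs reduce to the same algebraic manipulations. I will outline the plan for items~(1) and~(2); the second pair is obtained by the obvious substitution.

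For item~(1), the two equalities reduce to the identity of $2$-forms
\[
\sigma\wedge\rho\;=\;\sigma\wedge\rho'\;=\;\rho\wedge\rho',
\]
which is immediate from $\sigma=\rho-\rho'$ together with $\rho\wedge\rho=\rho'\wedge\rho'=0$. Wedging with $\de\sigma$ and applying $\ast$ then gives item~(1) at once.

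Item~(2) is the substantive step. By item~(1), it suffices to prove the general pointwise identity
\[
\rho\wedge\rho'\wedge\theta \;=\; -\theta(Z,Z')\,\mathcal{V}_{\Omega}\qquad\forall\,\theta\in\oLambda^{2}(M),
\]
and then specialize to $\theta=\de\sigma$. The crucial geometric input is that $Z\in\mathcal{D}'$ and $Z'\in\mathcal{D}$, by \autoref{ZWD}\autoref{Ovvia} and its $\mathfrak{I}$-counterpart; together with the $\Omega$-orthogonality of $\mathcal{D}$ and $\mathcal{D}'$, this forces $\Omega(Z,Z')=0$, so $Z$ and $Z'$ span an isotropic subspace at every point. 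To establish the pointwise identity, I would argue pointwise: where $Z,Z'$ are linearly dependent, both sides vanish; where they are linearly independent, the isotropic subspace they span is Lagrangian, and one completes it to a Darboux basis $e_{1}=Z$, $e_{2}=Z'$, $e_{3}$, $e_{4}$ of $\operatorname{T}_{x}M$ with dual basis satisfying $\Omega_{x}=e^{1}\wedge e^{3}+e^{2}\wedge e^{4}$. A direct expansion gives $\rho=e^{3}$, $\rho'=e^{4}$, and $\mathcal{V}_{\Omega}=-e^{1}\wedge e^{2}\wedge e^{3}\wedge e^{4}$; writing $\theta=\sum_{i<j}t_{ij}\,e^{i}\wedge e^{j}$ one then reads off both sides of the identity and finds the common value $-t_{12}\mathcal{V}_{\Omega}=-\theta(Z,Z')\mathcal{V}_{\Omega}$.

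The only real difficulty I anticipate is sign bookkeeping: tracking the normalization $\mathcal{V}_{\Omega}=\Omega^{2}/2$ and the fact that, on a $4$-manifold, the symplectic Hodge star $\ast$ applied to a top-degree form simply extracts its coefficient against $\mathcal{V}_{\Omega}$. No conceptual obstacle is present, and items~(3) and~(4) follow verbatim by replacing $\de\sigma$ with $\de\rho$ in the above argument.
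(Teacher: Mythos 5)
Your proof is correct, but for the substantive items \autoref{F2} and \autoref{F4} it takes a genuinely different route from the paper's. Items \autoref{F1} and \autoref{F3} are handled exactly as in the paper (immediate from $\sigma=\rho-\rho'$). For \autoref{F2}, the paper stays frame-free: it starts from the vanishing of the $5$-forms $\left(Z\ins\de\sigma\right)\wedge\frac12\Omega^2$ and $\left(\Omega\wedge\rho'\right)\wedge\de\sigma$ on the $4$-fold, inserts $Z'$ into the first and $Z$ into the second, and uses $Z'\ins\frac12\Omega^2=\Omega\wedge\rho'$ to arrive at $\de\sigma\left(Z,Z'\right)\cdot\frac12\Omega^2=-\rho\wedge\rho'\wedge\de\sigma$; item \autoref{F4} then follows by substituting $\de\rho$ for $\de\sigma$. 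You instead establish the pointwise master identity $\rho\wedge\rho'\wedge\theta=-\theta\left(Z,Z'\right)\mathcal{V}_{\Omega}$ for every $\theta\in\oLambda^2(M)$ by choosing a symplectic frame adapted to the Lagrangian plane spanned by $Z,Z'$; this is legitimate since both sides are $\operatorname{C}^\infty(M)$-multilinear, hence tensorial, and your frame computation checks out: $\rho=e^3$, $\rho'=e^4$, $\mathcal{V}_{\Omega}=-e^1\wedge e^2\wedge e^3\wedge e^4$, both sides equal $-t_{12}\mathcal{V}_{\Omega}$, and $\ast$ of a top form does extract its coefficient against $\mathcal{V}_{\Omega}$ because of the normalization $\langle\mathcal{V}_{\Omega},\mathcal{V}_{\Omega}\rangle=1$. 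It is worth noting that both arguments hinge on the same geometric input $\Omega\left(Z,Z'\right)=0$ (from $Z\in\mathcal{D}'$, $Z'\in\mathcal{D}$ and the $\Omega$-orthogonality of the two distributions): you make it explicit, while the paper uses it silently in the step $\left(Z\ins\left(\Omega\wedge\rho'\right)\right)\wedge\de\sigma=\left(Z\ins\Omega\right)\wedge\rho'\wedge\de\sigma$, where the term $\rho'(Z)\,\Omega$ drops precisely for this reason. What your route buys is a reusable identity valid for an arbitrary $2$-form $\theta$, which delivers \autoref{F2} and \autoref{F4} simultaneously and would serve equally for any further invariant of this shape; the cost is a pointwise case analysis and a Darboux-basis construction, whereas the paper's computation is shorter, purely exterior-algebraic, but tailored to the two specific forms at hand.
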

\begin{proof}
\hyperref[F1]{N.~\ref{F1}} and \autoref{F3} immediately come from $\sigma=\rho-\rho'$.

To prove \autoref{F2}, we observe that
\[
\left(Z\ins\de\sigma\right)\wedge\left(\frac12\Omega^2\right)=0
\]
as a $5$-form on a $4$-fold. Therefore, by inserting $Z'$ we obtain
\begin{equation}
\label{1descr}
\de\sigma\left(Z,Z'\right)\cdot\left(\frac12\Omega^2\right)+\left(Z'\ins\left(\frac12\Omega^2\right)\right)\wedge\left(Z\ins\de\sigma\right) =0\;.
\end{equation}
Similarly, $\left(\Omega\wedge\rho'\right)\wedge\de\sigma=0$ implies
\[
\left(Z\ins\left(\Omega\wedge\rho'\right)\right)\wedge\de\sigma=\Omega\wedge\rho'\wedge\left(Z\ins\de\sigma\right)\,,
\]
and therefore
\begin{multline}
\label{2descr}
\left(Z'\ins\left(\frac12\Omega^2\right)\right)\wedge\left(Z\ins\de\sigma\right)=\Omega\wedge\rho'\wedge\left(Z\ins\de\sigma\right)=\left(Z\ins\left(\Omega\wedge\rho'\right)\right)\wedge\de\sigma\\
=\left(Z\ins\Omega\right)\wedge\rho'\wedge\de\sigma=\rho\wedge\rho'\wedge\de\sigma\;.
\end{multline}
Now, the result immediately follows from \eqref{1descr}, \eqref{2descr} and \autoref{F1}.

To prove \autoref{F4} it suffices to replace $\de\sigma$ by $\de\rho$ in the above arguments.
\end{proof}

By using the involution $\mathfrak{I}$, we obtain a `dual' system of scalar invariants
\[
{\mathcal{I}^k}'=\mathfrak{I}\left(\mathcal{I}^k\right)\;,\qquad k=1,\ldots ,9\;.
\]
However, these are not new invariants. In particular, we have
\begin{prop}\label{Involution}
The following relations hold:
\begin{gather*}
{\mathcal{I}^1}'={\mathcal{I}^1}\;,\quad {\mathcal{I}^2}'=\mathcal{I}^2\;,\quad {\mathcal{I}^3}'=\mathcal{I}^2-\mathcal{I}^3\;,\\
{\mathcal{I}^4}'={\mathcal{I}^4}\;,\quad {\mathcal{I}^5}'=\mathcal{I}^4-\mathcal{I}^5\;,\quad {\mathcal{I}^6}'=\mathcal{I}^4-2\mathcal{I}^5+\mathcal{I}^6\;,\\
{\mathcal{I}^7}'=-{\mathcal{I}^7}\;.
\end{gather*}
\end{prop}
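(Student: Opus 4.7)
The strategy is to determine how $\mathfrak{I}$ acts on the basic building blocks and then substitute and simplify using the key identity $\Omega\wedge\de\sigma=0$ (already derived right after \autoref{DescrI1}). By the very definition of $\mathfrak{I}$, we have $\mathfrak{I}(\omega)=\omega'=\Omega-\omega$, $\mathfrak{I}(\rho)=\rho'=\rho-\sigma$, and hence $\mathfrak{I}(\sigma)=\rho'-\rho=-\sigma$. Thus $\mathfrak{I}$ fixes $\de\sigma$ up to sign and changes $\de\rho$ into $\de\rho-\de\sigma$. The plan is to write each ${\mathcal{I}^k}'$ as $\ast$ of a wedge expression obtained from $\mathcal{I}^k$ by those substitutions, expand, and collect.

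For $\mathcal{I}^1$, after substitution one gets $\ast(\omega\wedge\de\sigma)-\ast(\Omega\wedge\de\sigma)$, and the second term vanishes by $\Omega\wedge\de\sigma=0$. For $\mathcal{I}^4$, $\mathcal{I}^5$, $\mathcal{I}^6$, the substitutions are purely algebraic in $\de\sigma$ and $\de\rho$ and the relations come out immediately by expanding $(\de\rho-\de\sigma)^2$ etc. For $\mathcal{I}^2$ and $\mathcal{I}^3$, I would first invoke the identities from the unnumbered lemma, namely $\mathcal{I}^2=\ast(\sigma\wedge\rho'\wedge\de\sigma)$ and $\mathcal{I}^3=\ast(\sigma\wedge\rho'\wedge\de\rho)$; then under $\mathfrak{I}$ the first expression becomes $\ast(-\sigma\wedge\rho\wedge-\de\sigma)=\mathcal{I}^2$, while the second becomes $\ast(-\sigma\wedge\rho\wedge(\de\rho-\de\sigma))$, which after using the alternative descriptions again equals $\mathcal{I}^2-\mathcal{I}^3$.

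The only case requiring anything beyond bookkeeping is $\mathcal{I}^7$. Substituting gives
\[
{\mathcal{I}^7}'=-\ast\left(\sigma\wedge\de\rho\wedge\ast(\sigma\wedge\de\sigma)\right)+\ast\left(\sigma\wedge\de\sigma\wedge\ast(\sigma\wedge\de\sigma)\right),
\]
so the claim ${\mathcal{I}^7}'=-\mathcal{I}^7$ reduces to showing that the second summand vanishes. This is the one conceptual point of the proof: $\sigma\wedge\de\sigma$ is a $3$-form, and the symplectic pairing $\langle\cdot,\cdot\rangle$ on $\operatorname{\Lambda}^3(M)$ is graded-symmetric with sign $(-1)^3=-1$, so $\langle\alpha,\alpha\rangle=0$ for every $3$-form $\alpha$; by the defining property of the symplectic Hodge star, $\alpha\wedge\ast\alpha=\langle\alpha,\alpha\rangle\mathcal{V}_\Omega=0$, and hence $\ast(\alpha\wedge\ast\alpha)=0$ with $\alpha=\sigma\wedge\de\sigma$.

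The expected obstacle is purely notational: keeping signs straight when substituting $-\sigma$ for $\sigma$ at every occurrence (including inside $\de$ and nested $\ast$). I do not anticipate any genuine computational difficulty, since each line reduces either to algebraic expansion, to the identity $\Omega\wedge\de\sigma=0$, to the equivalent forms of $\mathcal{I}^2,\mathcal{I}^3$ given by \autoref{F1}--\autoref{F3}, or, in the single subtle case, to graded-symmetry of the symplectic pairing on forms of odd degree.
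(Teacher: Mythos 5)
Your proof is correct and follows essentially the same route as the paper's: the paper likewise derives all the relations by substituting $\sigma'=-\sigma$ and $\rho'=\rho-\sigma$ (giving the $\mathcal{I}^6$ expansion as its worked example), differing only in that it obtains ${\mathcal{I}^1}'=\mathcal{I}^1$ from the description $\mathcal{I}^1=\ast\de\left(\mathcal{R}\ins\Omega\right)$ combined with \autoref{Pre5} and \autoref{N5}, whereas you use the identity $\Omega\wedge\de\sigma=0$ --- an equivalent consequence of \autoref{N5}. A small point in your favor: where the paper says the remaining cases ``easily follow,'' you correctly isolate the one step for $\mathcal{I}^7$ that is not pure bookkeeping, namely the vanishing $\ast\left(\alpha\wedge\ast\alpha\right)=0$ for the odd-degree form $\alpha=\sigma\wedge\de\sigma$, proved via graded-antisymmetry of the symplectic pairing.
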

\begin{proof}
These formulae are more or less direct consequences of previously established relations connecting the involved invariant $1$- and $2$-forms. For instance, using the description \autoref{Dausare} in \autoref{DescrI1}, the first one immediately comes from \autoref{Pre5} and \autoref{N5}. All remaining cases easily follow from relations $\sigma=\rho-\rho'$ and $\sigma'=-\sigma$. For instance:
\begin{multline*}
{\mathcal{I}^6_{\mathcal{D}}}'=\ast\left(\left(\de\rho'\right)^2\right)=\ast\left(\left(-\de\sigma+\de\rho\right)^2\right)=\ast\left(\left(\de\sigma\right)^2-2\de\sigma\wedge\de\rho+\left(\de\rho\right)^2\right)\\
=\mathcal{I}^4_{\mathcal{D}}-2\mathcal{I}^5_{\mathcal{D}}+\mathcal{I}^6_{\mathcal{D}}\;.
\end{multline*}
\end{proof}

\section{Equivalence problem}

According to the general principle of $n$-invariants, we need four independent scalar invariants (see \cite[Chap.~7, Sect.~4.3]{ALV}). We say that some functions $\mathcal{I}^1,\ldots ,\mathcal{I}^k$ are \emph{(functionally) independent} when $\de\mathcal{I}^1,\ldots ,\de\mathcal{I}^k$ are linearly independent at every point in an open and dense subset.

\begin{prop}\label{Fourindip}
The invariants $\mathcal{I}^1,\mathcal{I}^2,\mathcal{I}^3,\mathcal{I}^5$ are independent.
\end{prop}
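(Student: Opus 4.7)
The strategy rests on the observation that each $\mathcal{I}^k$ is built from $\Omega$ and the four functions $(u^1,\ldots,u^4)$ parametrizing $\mathcal{D}$ in the affine chart on $G$ (cf.~\hyperref[BoE]{n.~\ref{BoE}}) by universal algebraic operations (wedge products, exterior differentiation, symplectic Hodge star, and the projections associated to the splitting \eqref{split}) together with finitely many rational manipulations; hence each $\mathcal{I}^k$ is a rational function of the coordinates on $\operatorname{J}^2(\gamma)$. Consequently, every $4\times 4$ minor of the Jacobian of $(\mathcal{I}^1,\mathcal{I}^2,\mathcal{I}^3,\mathcal{I}^5)$ with respect to the jet variables is itself a rational function: if any such minor is nonzero at one $2$-jet then it is nonzero on an open dense subset of $\operatorname{J}^2(\gamma)$, which is exactly functional independence in the sense stated. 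The proposition thus reduces to exhibiting a single distribution and a single point at which the four differentials are linearly independent.

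To carry that out I would work in Darboux coordinates $(x,p,y,q)$ with $\Omega=\de p\wedge\de x+\de q\wedge\de y$, parametrize a test $\mathcal{D}$ by functions $u^i(x,p,y,q)$, extract the explicit frame of $\mathcal{D}$ supplied by the affine chart and, via $\mathfrak{I}$, a frame of $\mathcal{D}'$. From these one computes in order $P,P'$, the forms $\omega=\tfrac12 P\ins\Omega$ and $\omega'=\tfrac12 P'\ins\Omega$ (cf.~\autoref{Ins}), the vector fields $Z,Z'$ of \autoref{ZWD}, and $\rho=Z\ins\Omega$, $\rho'=Z'\ins\Omega$, $\sigma=\rho-\rho'$, all as explicit rational expressions in $u^i$ and their first partials. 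Substituting into the defining formulae $\mathcal{I}^1=\ast(\omega\wedge\de\sigma)$, $\mathcal{I}^2=\ast(\sigma\wedge\rho\wedge\de\sigma)$, $\mathcal{I}^3=\ast(\sigma\wedge\rho\wedge\de\rho)$, $\mathcal{I}^5=\ast(\de\sigma\wedge\de\rho)$ yields each $\mathcal{I}^k$ as an explicit function of the $2$-jet of $(u^1,\ldots,u^4)$. I would then fix a convenient basepoint $\mathbf{p}$ --- for instance with $u^2(\mathbf{p})$, $u^3(\mathbf{p})$ distinct nonzero constants, $u^1(\mathbf{p})=u^4(\mathbf{p})=0$, and all first partial derivatives of the $u^i$ vanishing at $\mathbf{p}$, so that $\mathcal{D}$ and $\mathcal{D}'$ are complementary coordinate $2$-planes and only the second-order jet variables survive in the differentials. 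Differentiating $\mathcal{I}^1,\mathcal{I}^2,\mathcal{I}^3,\mathcal{I}^5$ with respect to those second-order variables at $\mathbf{p}$ and exposing a $4\times 4$ submatrix of the resulting Jacobian with nonzero determinant then concludes the argument.

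The principal obstacle is sheer computational bulk: even after the simplifications allowed by the chosen basepoint, the expressions for $\sigma\wedge\rho\wedge\de\sigma$, $\sigma\wedge\rho\wedge\de\rho$, $\de\sigma\wedge\de\rho$ and $\omega\wedge\de\sigma$ expand into many monomials in the second-order jet variables $u^i_{\alpha\beta}$ of $(u^1,\ldots,u^4)$. The serious practical step is therefore to choose the basepoint data sparse enough that the Jacobian collapses to a tractable (near block-diagonal) form in which a nonvanishing $4\times 4$ minor can be exhibited by inspection or by a short symbolic calculation, and to verify that no identity analogous to those of \autoref{Involution} forces an unexpected linear dependence among precisely the four chosen invariants. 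Conceptually, no further ingredient beyond the rationality argument and the explicit coordinate computation is needed.
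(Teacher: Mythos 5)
Your first paragraph --- each $\mathcal{I}^k$ is rational in the jet coordinates, so linear independence of the four differentials at a single $\theta\in\operatorname{J}^2(\gamma)$ propagates to an open dense subset --- is correct and is exactly the reduction the paper makes. The gap is in your concrete verification step, and it is fatal as specified. At the basepoint you propose, \emph{all} first partial derivatives of the $u^i$ vanish at $\mathbf{p}$. But the frame fields of $\mathcal{D}$ supplied by the affine chart have coefficients depending pointwise on the $u^i$, so the coefficients of their Lie bracket are linear combinations of the first derivatives $u^j_\mu$; hence at your basepoint $[X,Y](\mathbf{p})=0$, so $Z(\mathbf{p})=Z'(\mathbf{p})=0$ and therefore $\rho(\mathbf{p})=\rho'(\mathbf{p})=\sigma(\mathbf{p})=0$. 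Now $\mathcal{I}^2=\ast\left(\sigma\wedge\rho\wedge\de\sigma\right)$ and $\mathcal{I}^3=\ast\left(\sigma\wedge\rho\wedge\de\rho\right)$ contain $\sigma$ and $\rho$ as tensorial factors, so by the product rule every first-order variation of $\mathcal{I}^2$ or $\mathcal{I}^3$ at such a $2$-jet retains a factor $\sigma(\mathbf{p})$ or $\rho(\mathbf{p})$: one gets $\de\mathcal{I}^2(\theta)=\de\mathcal{I}^3(\theta)=0$ in \emph{all} jet directions, not merely the second-order ones. At your basepoint at most the two differentials $\de\mathcal{I}^1$, $\de\mathcal{I}^5$ can be nonzero, and no nonvanishing $4\times 4$ minor exists there. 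The sparseness you introduced to tame the computation is precisely what kills the invariants built from the $1$-forms; any admissible test point must carry nonzero first-order jet data so that $Z$ and $Z'$ do not vanish.

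The repair is easy in principle (take generic first-order data), but note how the paper avoids the jet-space Jacobian altogether: it evaluates the invariants on one explicit distribution with polynomial frame, namely $\left\langle\,(xy+1)\partial_p+\partial_y+pq\partial_q\,,\;\partial_x+\partial_p+xy\partial_q\,\right\rangle$, computes the four pulled-back functions $\mathcal{I}^1_{\mathcal{D}},\mathcal{I}^2_{\mathcal{D}},\mathcal{I}^3_{\mathcal{D}},\mathcal{I}^5_{\mathcal{D}}$ on the $4$-fold $M$ by hand, and observes that $xy$, $py$, $yq$, and then $y^2\left(-x^2y^2-xy+yq\right)$ are polynomial in them, so that on the dense open set $\left\{-x^2y^2-xy+yq\ne 0\right\}$ the coordinates $x,p,y,q$ are functions of the four invariant values; this gives independence of the pullbacks on $M$, which by the chain rule gives independence of $\de\mathcal{I}^1,\de\mathcal{I}^2,\de\mathcal{I}^3,\de\mathcal{I}^5$ at points of the image of the section, and rationality finishes as in your first paragraph. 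Since there are exactly four invariants and $\dim M=4$, this pullback trick collapses the ``computational bulk'' you worry about into a short hand calculation. Finally, your closing concern about relations analogous to those of \autoref{Involution} is moot: a single successful point check already excludes any functional dependence, so no separate verification is needed.
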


\begin{proof}
Let $\mathcal{D}$ be (locally) spanned by vector fields
\[
(xy+1)\partial_p+\partial_y+pq\partial_q\;,\qquad \partial_x+\partial_p+xy\partial_q
\]
(in a canonical chart). A direct calculation gives
\begin{multline*}
Z=(-xpy+x-q)\partial_x+(-xpy+xy^2+x-q)\partial_p\\+y\partial_y+(-x^2py^2+x^2y-xpy-xyq+pyq+x-q)\partial_q\;,
\end{multline*}
\begin{multline*}
Z'=(-xpy+x-p-q)\partial_x+(-xpy+xy^2+x-p+y-q)\partial_p\\+y\partial_y+(-x^2py^2+x^2y-xpy-xyq+pyq)\partial_q\;,
\end{multline*}
\begin{multline*}
\omega=(-xy-1)\de x\wedge\de p+(x^2y^2+xy-pq)\de x\wedge\de y+\de x\wedge\de q\\+pq\de p\wedge\de y-\de p\wedge\de q+xy\de y\wedge\de q\;,
\end{multline*}
\begin{multline*}
\omega'=xy\de x\wedge\de p+(-x^2y^2-xy+pq)\de x\wedge\de y-\de x\wedge\de q\\-pq\de p\wedge\de y+\de p\wedge\de q+(-xy-1)\de y\wedge\de q\;,
\end{multline*}
\begin{multline*}
\rho=(-xpy+xy^2+x-q)\de x+(xpy-x+q)\de p\\+(-x^2py^2+x^2y-xpy-xyq+pyq+x-q)\de y-y\de q\;,
\end{multline*}
\begin{multline*}
\rho'=(-xpy+xy^2+x-p+y-q)\de x+(xpy-x+p+q)\de p\\+(-x^2py^2+x^2y-xpy-xyq+pyq)\de y-y\de q\;,
\end{multline*}
\[
\sigma=(p-y)\de x-p\de p+(x-q)\de y\;,\hspace*{6.5cm}
\]
which lead to
\begin{eqnarray*}
\mathcal{I}^1_{\mathcal{D}}&=&-2xy+1\;;\\
\mathcal{I}^2_{\mathcal{D}}&=&2xy-2py-2yq\;;\\
\mathcal{I}^3_{\mathcal{D}}&=&2 x^{2}py^{3} + 2xp^{2}y^{3} - x^{2}y^{4}-2x^{2}y^{2}\\&&\hspace*{3cm}- xpy^{2} + p^{2}y^{2} - xy^{3} + 2xy^{2}q + py^{2}q + y^{3}q- py\;;\\
\mathcal{I}^5_{\mathcal{D}}&=&2py+1\;.
\end{eqnarray*}

The above expressions easily give $xy,py,yq$ as polynomials in $\mathcal{I}^1_{\mathcal{D}},\mathcal{I}^2_{\mathcal{D}},\mathcal{I}^5_{\mathcal{D}}$ and, consequently, $y^2\left(-x^2y^2-xy+yq\right)$ as a polynomial in $\mathcal{I}^1_{\mathcal{D}},\mathcal{I}^2_{\mathcal{D}},\mathcal{I}^3_{\mathcal{D}},\mathcal{I}^5_{\mathcal{D}}$. Then, in the open (and dense) domain $V:=\left\{-x^2y^2-xy+yq\ne 0\right\}$, coordinates $x,y,p,q$ are smooth functions of $\mathcal{I}^1_{\mathcal{D}}, \mathcal{I}^2_{\mathcal{D}}, \mathcal{I}^3_{\mathcal{D}}, \mathcal{I}^5_{\mathcal{D}}$. This, obviously, implies the independence of the latter in~$V$.
But $\mathcal{I}^1_{\mathcal{D}}, \mathcal{I}^2_{\mathcal{D}}, \mathcal{I}^3_{\mathcal{D}}, \mathcal{I}^5_{\mathcal{D}}$ are pullbacks of $\mathcal{I}^1,\mathcal{I}^2,\mathcal{I}^3,\mathcal{I}^5$ through the section of $\operatorname{J}^2(\gamma)$ corresponding to $\mathcal{D}$. Since in the jet-coordinates extending those in \hyperref[BoE]{n.~\ref{BoE}}, $\mathcal{I}^1,\mathcal{I}^2,\mathcal{I}^3,\mathcal{I}^5$ are rational functions, independence even at a single $\theta\in\operatorname{J}^2\left(\gamma\right)$ implies independence over a (Zariski) open and dense subset. Thus we conclude that $\mathcal{I}^1,\mathcal{I}^2,\mathcal{I}^3,\mathcal{I}^5$ are independent~\footnote{We also have $\mathcal{I}^4_{\mathcal{D}}=-2$, $\mathcal{I}^6_{\mathcal{D}}=-4xpy^2-2p^2y^2-4xy+2$. Hence for $\mathcal{D}$ these invariants functionally depend on $\mathcal{I}^1_{\mathcal{D}},\mathcal{I}^5_{\mathcal{D}}$. To prove by hands some other independence results, one may change distribution. For instance, independence of $\mathcal{I}^1,\mathcal{I}^2,\mathcal{I}^3,\mathcal{I}^4$ may be verified by using the distribution
\[
\left\langle\;\partial_p+\partial_y+pq\partial_q\;,	\;\partial_x+xy\partial_p\;\right\rangle\;.
\]
}.
\end{proof}

Consider a $2$-distribution $\mathcal{D}$ and the values of four independent differential invariants, say $\mathcal{I}^1_{\mathcal{D}},\mathcal{I}^4_{\mathcal{D}},\mathcal{I}^5_{\mathcal{D}},\mathcal{I}^2_{\mathcal{D}}$, as a local chart on $M$. Then the components of the projector $P$ in this local chart characterize completely the equivalence class of $\mathcal{D}$. These components can be found as follows. Consider differential forms
\[
\alpha_1:=P^\ast\left(\de\mathcal{I}^1_{\mathcal{D}}\right)\,,\quad\alpha_2:=P^\ast\left(\de\mathcal{I}^4_{\mathcal{D}}\right)\,,\quad \alpha_3:=P^\ast\left(\de\mathcal{I}^5_{\mathcal{D}}\right)\,,\quad\alpha_4:= P^\ast\left(\de\mathcal{I}^2_{\mathcal{D}}\right)\;,
\]
where $P^\ast:\operatorname{\Lambda}^1(M)\to\operatorname{\Lambda}^1(M)$ is the dual of $P:\D{M}\to\D{M}$. These forms are, obviously, invariants of $\mathcal{D}$, and their components in the considered local chart are nothing but the components of the tensor $P$ in this chart.

\section{Second Order Differential Invariants}

All scalar differential invariants constructed in \hyperref[DIDS]{Section~\ref{DIDS}} are, as it is easy to see, of second order. In this section we shall show that invariants $\mathcal{I}^1,\ldots ,\mathcal{I}^7$ form a complete system of second order scalar differential invariants.

First of all we have the following result.
\begin{prop}\label{Compind}
The invariants $\mathcal{I}^1,\ldots ,\mathcal{I}^7$ are independent.
\end{prop}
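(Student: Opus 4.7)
The plan is to exhibit a single point $\theta\in\operatorname{J}^2(\gamma)$ at which $\de\mathcal{I}^1,\ldots,\de\mathcal{I}^7$ are linearly independent. Since, in the jet coordinates introduced in \hyperref[BoE]{n.~\ref{BoE}}, each $\mathcal{I}^k$ is a rational function of the jet variables, independence at one such $\theta$ automatically propagates to a Zariski open and dense subset of $\operatorname{J}^2(\gamma)$, exactly as in the concluding paragraph of the proof of \autoref{Fourindip}.

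Unlike in \autoref{Fourindip}, a single fixed distribution $\mathcal{D}$ on $M$ will not deliver such a $\theta$: the pullbacks $\mathcal{I}^k_{\mathcal{D}}$ live on the four-dimensional manifold $M$ and can contribute at most four independent differentials there. I would therefore deform the distribution of \autoref{Fourindip} into a three-parameter family $\mathcal{D}_s$, with $s=(s_1,s_2,s_3)\in\mathbb{R}^3$, and study the map
\[
\Psi:(x,s)\longmapsto\bigl(\mathcal{I}^1_{\mathcal{D}_s}(x),\ldots,\mathcal{I}^7_{\mathcal{D}_s}(x)\bigr)\in\mathbb{R}^7\,.
\]
Linear independence of $\de\mathcal{I}^1,\ldots,\de\mathcal{I}^7$ at the image $2$-jet of $\mathcal{D}_s$ at $x$ then amounts to nonvanishing of the $7\times 7$ Jacobian of $\Psi$ with respect to $(x,p,y,q,s_1,s_2,s_3)$ at some suitably chosen point. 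The parameters $s_i$ should be placed so as to move the three invariants $\mathcal{I}^4, \mathcal{I}^6, \mathcal{I}^7$ that were not exploited in \autoref{Fourindip} (where, e.g., $\mathcal{I}^4_{\mathcal{D}}\equiv -2$); the footnote at the end of \autoref{Fourindip} already makes clear that small changes in the distribution produce nontrivial motion of $\mathcal{I}^4$, so the required freedom is available.

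The concrete execution proceeds as in \autoref{Fourindip}: compute $Z, Z', \omega, \omega', \rho, \sigma$ for $\mathcal{D}_s$ via their definitions from \hyperref[DIDS]{Section~\ref{DIDS}}; assemble $\mathcal{I}^1_{\mathcal{D}_s},\ldots,\mathcal{I}^7_{\mathcal{D}_s}$ through their explicit formulae; and evaluate the $7\times 7$ Jacobian of $\Psi$ at a convenient base point, chosen so that many entries of the matrix vanish identically and the remaining minor can be checked by inspection.

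The principal obstacle is purely computational: the symbolic expressions already exhibited in \autoref{Fourindip} indicate the heft of the algebra, and introducing three extra parameters magnifies it substantially. As in the proof of \autoref{Fourindip}, I would delegate the bookkeeping to a computer algebra system such as \cocoa. The one genuinely nontrivial design decision is the placement of the $s_i$: they should be encoded in coefficients that only enter the second-order part of the jet of $\mathcal{D}_s$, so that their contributions to the Jacobian decouple from the first-order spatial contributions already analyzed in \autoref{Fourindip}, splitting the determinant into manageable $4\times 4$ and $3\times 3$ blocks.
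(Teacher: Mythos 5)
Your overall reduction is exactly the paper's: since the $\mathcal{I}^k$ are rational functions in the jet coordinates of \hyperref[BoE]{n.~\ref{BoE}}, independence at one point of $\operatorname{J}^2(\gamma)$ propagates to a Zariski open and dense subset, so it suffices to check a single $\theta$; the paper then simply reports that a direct computer check at a suitable $\theta$ gives the result. Where you diverge is in how the single-point check is organized, and your justification for diverging rests on a misreading. You claim that a single fixed distribution ``will not deliver such a $\theta$.'' That is not so: every $\theta\in\operatorname{J}^2(\gamma)$ is the $2$-jet of some distribution, and independence at $\theta$ means linear independence of $\de\mathcal{I}^1,\ldots,\de\mathcal{I}^7$ in $\operatorname{T}^\ast_\theta\operatorname{J}^2(\gamma)$, a cotangent space whose dimension far exceeds $7$, with components dual to \emph{all} the jet variables, not just the base coordinates. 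What is true is that the method of \autoref{Fourindip} --- pulling the invariants back along one section and testing independence of the functions $\mathcal{I}^k_{\mathcal{D}}$ on the four-dimensional $M$ --- can certify at most four invariants; but the paper's check does not pull back: it computes the full Jacobian of $\left(\mathcal{I}^1,\ldots,\mathcal{I}^7\right)$ with respect to the jet coordinates at one chosen $\theta$ and verifies rank $7$, with no deformation family needed.

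That said, your scheme is valid, only roundabout. Writing $\Psi=\left(\mathcal{I}^1,\ldots,\mathcal{I}^7\right)\circ\Phi$ with $\Phi(x,s)$ the $2$-jet at $x$ of the section representing $\mathcal{D}_s$, a nonvanishing $7\times7$ Jacobian of $\Psi$ at $(x_0,s_0)$ does force $\de\mathcal{I}^1,\ldots,\de\mathcal{I}^7$ to be independent at $\theta=\Phi(x_0,s_0)$, so it is a legitimate sufficient certificate, and \autoref{Atmost} together with the footnote to \autoref{Fourindip} (which shows $\mathcal{I}^4$ moves under a change of distribution) makes the existence of a suitable three-parameter family plausible. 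But be aware of two things. First, like the paper, you have not actually exhibited the point or the nonzero determinant; both arguments ultimately defer to a computer algebra computation, so your proposal is no more complete than the printed proof. Second, your version is strictly harder to execute than the paper's: it injects three symbolic parameters into expressions whose bulk is already apparent in \autoref{Fourindip}, whereas the direct check only requires evaluating the partial derivatives of seven rational functions at a single point with generic numerical jet coordinates and computing the rank of the resulting constant matrix. The block-decoupling you hope for (parameters entering only the second-order jet) is a reasonable design heuristic but is not needed for correctness and would have to be verified in the computation anyway.
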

\begin{proof}
As in the proof of \autoref{Fourindip}, observe that the considered invariants are rational functions in the coordinates introduced in \hyperref[BoE]{n.~\ref{BoE}}. Hence it is sufficient to verify their independence at a suitable single point $\theta\in\operatorname{J}^2\left(\gamma\right)$ only. With this simplification a direct computer check gives the desired result.
\end{proof}

\begin{rem}
\autoref{Fourindip} is obviously a consequence of the above proposition. However, we preferred an independent proof because it can be done by hands.
On the contrary, a by hands proof of independence of $\mathcal{I}^1,\ldots ,\mathcal{I}^7$ would require an unreasonable `spacetime'.
\end{rem}

Let $\gamma$ be as in \hyperref[BoE]{n.~\ref{BoE}} and denote by $r$ the maximal number of second order independent invariants. In order to prove that $r\le 7$ it is sufficient to show that the codimension of generic orbits of a natural action of symplectomorphisms of $\left(M,\Omega\right)$ on $\operatorname{J}^2\left(\gamma\right)$ is at most $7$. To this end, we shall consider natural lifts of Hamiltonian fields on $M$ to $\operatorname{J}^2\left(\gamma\right)$ and generated by them subspaces $H_\theta\subset\operatorname{T}_\theta\left(\operatorname{J}^2\left(\gamma\right)\right)$, for all $\theta\in\operatorname{J}^2\left(\gamma\right)$. Obviously, $r$ is not greater than the codimension $r_\theta$ of $H_\theta$. So, it suffices to find a point $\theta$ for which $r_\theta=7$. By making some simple computer tests, we easily find such $\theta$. In these computations we used \cocoa\ (see \cite{C}). Independently, this check was done with Maple{\tiny$^{\text{\texttrademark}}$} by M.\ Marvan. Thus we have

\begin{prop}\label{Atmost}
There are no more than $7$ independent second order scalar differential invariant of $2$-distributions in~$\left(M,\Omega\right)$.
\end{prop}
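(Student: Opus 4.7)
The plan is to bound the maximal number $r$ of functionally independent second order scalar differential invariants from above by exhibiting a single point $\theta\in\operatorname{J}^2(\gamma)$ whose orbit under the pseudogroup of symplectomorphisms has codimension at most $7$. Since every local symplectomorphism is (locally) the time--$1$ flow of a Hamiltonian vector field $X_H$, the tangent space to the orbit through $\theta$ coincides with $H_\theta\subseteq\operatorname{T}_\theta\operatorname{J}^2(\gamma)$, the subspace spanned by the values at $\theta$ of the natural lifts of the fields $X_H$. By the principle of $n$-invariants (\cite[Chap.~7]{ALV}) one has $r\le r_\theta:=\dim\operatorname{J}^2(\gamma)-\dim H_\theta$ for every $\theta$, so producing one $\theta$ with $r_\theta=7$ suffices. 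Together with \autoref{Compind} this will pin down $r$ exactly.

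First I would write down the lift of $X_H$ to $\operatorname{J}^2(\gamma)$ in the canonical chart $(x,p,y,q,u^1,\ldots,u^4)$ of n.~\ref{BoE}, extended by the standard jet coordinates up to order two. The crucial observation is that this lift depends linearly on the $4$-jet of $H$ at the base point $\mathbf{p}:=\gamma(\theta)$: the field $X_H$ itself is linear in the $1$-jet of $H$, and the two successive prolongations to $\operatorname{J}^1(\gamma)$ and $\operatorname{J}^2(\gamma)$ add one derivative order each. Hence the space of infinitesimal actions at $\theta$ is finite-dimensional, and $H_\theta$ is the image of an explicit linear map from the finite-dimensional space of $4$-jets of functions on $M$ at $\mathbf{p}$ into $\operatorname{T}_\theta\operatorname{J}^2(\gamma)$.

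Next I would choose a convenient $\theta$. The coefficients of the lift are polynomial in the coordinates of $\theta$, so the rank of the associated matrix is a lower semicontinuous function of $\theta$ and it is enough to test a single sufficiently generic point (in particular, avoiding the Lagrangian locus where the description breaks down). Collecting the basis vectors of $H_\theta$ coming from the elementary monomial Hamiltonians $x^a p^b y^c q^d$ with $a+b+c+d\le 4$ into a matrix and computing its corank with a symbolic algebra package yields $r_\theta$. The computation, performed in \cocoa\ (\cite{C}) and independently verified in Maple by M.~Marvan, returns exactly corank $7$, whence $r\le 7$.

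The main obstacle is bookkeeping rather than ideas: one must correctly prolong $X_H$ twice on $\operatorname{J}^2(\gamma)$, encode a basis of $4$-jets of Hamiltonians, and verify genericity of the chosen $\theta$ so that the matrix rank one records at that point is really the rank over a Zariski-open set. Once the prolongation formulae are set up and the point fixed, the conclusion follows from a purely mechanical linear algebra check.
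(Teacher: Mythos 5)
Your proposal is correct and follows essentially the same route as the paper: bound $r$ by the codimension $r_\theta$ of the subspace $H_\theta$ spanned at $\theta\in\operatorname{J}^2(\gamma)$ by lifted Hamiltonian fields, exploit the fact that these lifts are polynomial in the chart of n.~\ref{BoE} (so semicontinuity of rank makes a single point test suffice), and verify $r_\theta=7$ by a symbolic computation, exactly as done in the paper with \cocoa\ and Marvan's independent Maple check. One bookkeeping remark: the lift already gains a derivative order in passing from $M$ to $G$ (the infinitesimal action on the Grassmannian coordinates $u^i$ involves second derivatives of $H$), so the correct count is $1$-jet on $M$, $2$-jet on $G$, $3$-jet on $\operatorname{J}^1(\gamma)$, $4$-jet on $\operatorname{J}^2(\gamma)$ --- consistent with your final figure of $4$-jets, though not with your ``two prolongations from the $1$-jet'' accounting.
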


\section{Differential Invariants of Symplectic MAEs}\label{DISM}

Since a symplectic MAE $\mathcal{E}$ is identified with the unordered pair of distributions $\left\{\mathcal{D}_{\mathcal{E}},\mathcal{D}_\mathcal{E}'\right\}$, a differential invariant of $\mathcal{D}_{\mathcal{E}}$ (or of $\mathcal{D}_{\mathcal{E}}'$) is a differential invariant of $\mathcal{E}$ if and only if it is invariant with respect to the involution $\mathfrak{I}$. By using invariants $\mathcal{I}^1,\ldots ,\mathcal{I}^7$ of $2$-dimensional distributions it is not difficult to construct from them $\mathfrak{I}$-invariant polynomials by using \autoref{Involution}. One of many possibilities to do that is as follows:
\begin{equation}\label{J}
\begin{array}{l}
\mathcal{J}^1:=\mathcal{I}^1\;,\\
\mathcal{J}^2:=\mathcal{I}^2\;,\\
\mathcal{J}^3:=\mathcal{I}^3{\mathcal{I}^3}'=\mathcal{I}^2\mathcal{I}^3-\left(\mathcal{I}^3\right)^2\\
\mathcal{J}^4:=\mathcal{I}^4\;,\\
\mathcal{J}^5:=\mathcal{I}^5{\mathcal{I}^5}'=\mathcal{I}^4\mathcal{I}^5-\left(\mathcal{I}^5\right)^2\\
\mathcal{J}^6:=\mathcal{I}^6-\mathcal{I}^5\\
\mathcal{J}^7:=\mathcal{I}^7{\mathcal{I}^7}'=-\left(\mathcal{I}^7\right)^2
\end{array}
\end{equation}
These invariants are independent at every $\theta\in\operatorname{J}^2\left(\gamma\right)$ where $\mathcal{I}^1,\ldots,\mathcal{I}^7$ are independent and $\mathcal{I}^3\ne{\mathcal{I}^3}'$, $\mathcal{I}^5\ne{\mathcal{I}^5}'$, $\mathcal{I}^7\ne 0$. Thus, in view of \autoref{Compind}, they are independent invariants for \emph{generic hyperbolic} symplectic MAEs.

This result is interesting in its own, but can easily be extended to the elliptic case. To this end, we notice that an $\Omega$-selfadjoint operator $A:\D{M}\to\D{M}$ is naturally associated with a symplectic MAE $\mathcal{E}$. This operator is a symplectic version of the operator $\mathbf{A}$ described in~\hyperref[MAE]{n.~\ref{MAE}}. Solutions of $\mathcal{E}$ are Lagrangian submanifolds $L\subset M$ such that $\operatorname{T}_xL$ is an invariant subspace of $A_x:\operatorname{T}_x\to\operatorname{T}_x$, $\forall x\in L$. When $\mathcal{E}$ is hyperbolic, then $A=P-P'$ or $A=P'-P$ with $P$, $P'$ being the $\Omega$-orthogonal projectors defined in \hyperref[DIDS]{Section~\ref{DIDS}}. Alternatively, this operator $A$ can be characterized as an $\Omega$-selfadjoint operator such that $A^2=\operatorname{id}$, $A\ne\pm\operatorname{id}$. Similarly, an elliptic (resp., parabolic) MAE is associated with an $\Omega$-selfadjoint operator such that $A^2=-\operatorname{id}$ (resp., $A^2=0$, $A\ne 0$). In particular, for hyperbolic and elliptic equations the operator $A$ is uniquely defined up to the sign. Hence symplectic differential invariants of such an operator $A$ that are invariant with respect to the involution $A\to -A$ are differential invariants of MAE associated with $A$. By this reason, in order to construct symplectic differential invariants for elliptic MAEs it is sufficient to express previously found invariants for hyperbolic MAEs in terms of the operator~$A$. Namely, we have
\begin{lemma}
\label{BhA}
If $\mathcal{D}$ is a non-Lagrangian $2$-distribution on $M$ and $A:\D{M}\to\D{M}$ is such that $\left.A\right|_{\mathcal{D}}=\operatorname{id}$ and $\left.A\right|_{\mathcal{D}'}=-\operatorname{id}$, then
\[
\omega-\omega'=\frac 12A\ins\Omega\;,\qquad\sigma=\frac12\left[\ast\de\left(\omega-\omega'\right)\right]\;,\qquad\rho+\rho'=-\Gamma A\Gamma^{-1}\sigma\;.
\]
\end{lemma}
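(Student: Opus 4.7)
The plan is to address the three identities in order, each resting on material already established earlier in the paper.

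For $\omega-\omega'=\frac12 A\ins\Omega$, the hypotheses on $A$ determine it uniquely on each summand of the splitting $\mathcal{D}\oplus\mathcal{D}'=\D{M}$, and they agree verbatim with the action of $P-P'$. Hence $A=P-P'$, and applying \autoref{Ins} to both $\omega$ and $\omega'$ and subtracting gives the first formula at once.

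For $\sigma=\frac12\bigl[\ast\de(\omega-\omega')\bigr]$, I would start from the equivalent rewriting $\de\omega=\Omega\wedge\sigma$ of \autoref{N5} (used just after \autoref{DescrI1}). The involution $\mathfrak{I}$ interchanges $\rho$ and $\rho'$, hence sends $\sigma$ to $-\sigma$, so it turns the previous identity into $\de\omega'=-\Omega\wedge\sigma$. Subtracting yields $\de(\omega-\omega')=2\,\Omega\wedge\sigma$, and the claim reduces to the Hodge-star identity $\ast(\Omega\wedge\tau)=\tau$ for every $1$-form $\tau$. I expect this to be the only nonformal step: in a canonical chart $(x,p,y,q)$, a short direct computation from the defining relation $\alpha\wedge\ast\beta=\langle\alpha,\beta\rangle\mathcal{V}_\Omega$ shows both that $\ast\tau=\Omega\wedge\tau$ on $1$-forms and that $\ast^2=\operatorname{id}$ on $1$-forms, whence $\ast(\Omega\wedge\tau)=\ast\ast\tau=\tau$.

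Finally, $\rho+\rho'=-\Gamma A\Gamma^{-1}\sigma$ is an algebraic consequence of the fact that $\sigma=\rho-\rho'=\Gamma(Z-Z')$, so that $\Gamma^{-1}(\sigma)=Z-Z'$. By \autoref{ZWD}, \autoref{Ovvia}, $Z\in\mathcal{D}'$ and, symmetrically, $Z'\in\mathcal{D}$, so the hypothesis on $A$ gives $A(Z)=-Z$ and $A(Z')=Z'$, whence $A(Z-Z')=-(Z+Z')$. Applying $\Gamma$ then yields
\[
\Gamma A\Gamma^{-1}(\sigma)=\Gamma\bigl(-Z-Z'\bigr)=-\bigl(\rho+\rho'\bigr),
\]
which is the desired identity. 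The main obstacle, as noted, is the symplectic-Hodge computation in step two; everything else is formal manipulation of the invariants already introduced.
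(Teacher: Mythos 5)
Your proof is correct and follows essentially the same route as the paper's: the paper's one-line proof invokes exactly the ingredients you expand, namely $P=\frac12\left(\operatorname{id}_{\D{M}}+A\right)$ (equivalently $A=P-P'$) together with \autoref{Ins} for the first identity, \autoref{N5} in the form $\de\omega=\Omega\wedge\sigma$ for the second, and the memberships $Z\in\mathcal{D}'$, $Z'\in\mathcal{D}$ for the third. Your explicit verification of the symplectic-Hodge facts $\ast\tau=\Omega\wedge\tau$ and $\ast^2=\operatorname{id}$ on $1$-forms, which the paper leaves implicit, is accurate and merely fills in the one nonformal detail.
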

\begin{proof}
It follows from the obvious relation $P=\frac12\left(\operatorname{id}_{\D{M}}+A\right)$, \autoref{Ins} and \autoref{N5}.
\end{proof}
Forms 
\[
\theta:=\theta_A:=\frac 12A\ins\Omega\,,\;\sigma:=\sigma_A:=\frac14\ast\de\left(A\ins\Omega\right)\,,\;\varrho:=\varrho_A:=-\frac14\Gamma A\Gamma^{-1}\left(\ast\de\left(A\ins\Omega\right)\right)
\]
are differential invariants of the operator $A$. By \autoref{BhA} in the hyperbolic case we have
\[
\omega=\frac12\left(\Omega+\theta\right)\;,\qquad\rho=\frac12\left(\varrho+\sigma\right)\;.
\]
By substituting these relations for $\omega$, $\rho$ in formulas \eqref{J} we find the description of invariants $\mathcal{I}^k$'s and consequently of $\mathcal{J}^k$'s in terms of $\theta$, $\sigma$ and $\varrho$. Since these expressions for $\mathcal{J}^k$'s are invariant with respect to the involution $A\to -A$, they are differential invariants of the associated hyperbolic MAEs. According to the above said they also give differential invariants of elliptic MAEs. However, by some reasons, it is more convenient to use invariants
\begin{gather*}
\tilde{\mathcal{J}}^1_{\mathcal{E}}:=\ast\left(\theta\wedge\de\sigma\right)\;,\\
\tilde{\mathcal{J}}^2_{\mathcal{E}}:=\ast\left(\sigma\wedge\varrho\wedge\de\sigma\right)\;,\quad\tilde{\mathcal{J}}^3_{\mathcal{E}}:=\left[\ast\left(\sigma\wedge\varrho\wedge\de\varrho\right)\right]^2\;,\\
\tilde{\mathcal{J}}^4_{\mathcal{E}}:=\ast\left(\left(\de\sigma\right)^2\right)\;,\quad\tilde{\mathcal{J}}^5_{\mathcal{E}}:=\left[\ast\left(\de\sigma\wedge\de\varrho\right)\right]^2\;,\quad\tilde{\mathcal{J}}^6_{\mathcal{E}}:=\ast\left(\left(\de\varrho\right)^2\right)\;,\\
\tilde{\mathcal{J}}^7_{\mathcal{E}}:=\left[\ast\left(\sigma\,\wedge\,\de\varrho\,\wedge\,\ast\left(\sigma\wedge\de\sigma\right)\right)\right]^2\;,\quad\tilde{\mathcal{J}}^8_{\mathcal{E}}:=\ast\left(\sigma\,\wedge\,\de\varrho\,\wedge\,\ast\left(\varrho\wedge\de\sigma\right)\right)\;,\\
\quad\tilde{\mathcal{J}}^9_{\mathcal{E}}:=\ast\left(\sigma\,\wedge\,\de\sigma\,\wedge\,\ast\left(\varrho\wedge\de\varrho\right)\right)\;.
\end{gather*}

This way we get common differential invariants $\tilde{\mathcal{J}}^1$, $\ldots$, $\tilde{\mathcal{J}}^9$ for elliptic and hyperbolic symplectic MAEs.

\begin{thm}
The differential invariants
\[
\tilde{\mathcal{J}}^1\;,\quad\tilde{\mathcal{J}}^2\;,\quad\tilde{\mathcal{J}}^3\;,\quad\tilde{\mathcal{J}}^4\;,\quad\tilde{\mathcal{J}}^5\;,\quad\tilde{\mathcal{J}}^6\;,\quad\tilde{\mathcal{J}}^7
\]
are independent, and seven is the maximum possible order for a system of independent invariants for symplectic MAEs.
\end{thm}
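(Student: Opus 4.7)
The proof splits into showing (a) the seven invariants $\tilde{\mathcal{J}}^1,\ldots,\tilde{\mathcal{J}}^7$ are functionally independent, and (b) no collection of eight second-order scalar symplectic invariants of an SMAE can be independent.

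For (a), I would mimic the strategy of \autoref{Compind}. In the jet coordinates on $\operatorname{J}^2(\pi)$ extending the chart $\left(x,p,y,q,v^1,\ldots,v^4\right)$ of \hyperref[BoE]{n.~\ref{BoE}}, the operator $A$ is rational in the $v^i$, so the forms $\theta_A$, $\sigma_A$, $\varrho_A$, their exterior derivatives, and the symplectic Hodge star of the relevant wedge products are all rational functions of the second-order jet coordinates. Consequently each $\tilde{\mathcal{J}}^k$ is rational, and the independence claim reduces to the non-vanishing of $\de\tilde{\mathcal{J}}^1\wedge\cdots\wedge\de\tilde{\mathcal{J}}^7$ at a \emph{single} well-chosen $\theta\in\operatorname{J}^2(\pi)$. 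This is a rank check for a $7\times\dim\operatorname{J}^2(\pi)$ Jacobian, which can be performed with \cocoa\ or Maple exactly as in \autoref{Compind} and \autoref{Atmost}. A convenient preliminary sanity check is that each $\tilde{\mathcal{J}}^k$ is invariant under the substitution $A\mapsto -A$ (which encodes $\mathfrak{I}$ in the hyperbolic case): one has $\theta\mapsto-\theta$, $\sigma\mapsto-\sigma$, $\varrho\mapsto\varrho$, and inspection of the defining formulae shows that each $\tilde{\mathcal{J}}^k$ collects an even number of sign changes, so it descends to an honest function on $\operatorname{J}^2(\pi)$.

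For (b), the cleanest route in the hyperbolic case is to invoke the two-fold covering of \hyperref[BoE]{n.~\ref{BoE}}: generically $\operatorname{J}^2(\gamma)\to\operatorname{J}^2(\pi)$ is a two-sheeted unramified cover whose deck transformation is $\mathfrak{I}$, and scalar symplectic invariants of SMAEs correspond bijectively to $\mathfrak{I}$-invariant scalar invariants of non-Lagrangian $2$-distributions. Since the $\mathfrak{I}$-invariant subfield of the rational function field on $\operatorname{J}^2(\gamma)$ has the same transcendence degree as the ambient field, the bound of $7$ supplied by \autoref{Atmost} transfers to the MAE side. In the elliptic case, where there is no real splitting $\mathcal{D}\oplus\mathcal{D}'$, I would repeat the Hamiltonian-lift argument underlying \autoref{Atmost} directly on $\operatorname{J}^2(\pi)$: lifts of Hamiltonian vector fields from $M$ span, at a generic $\theta$, a subspace of codimension at most $7$ in $\operatorname{T}_\theta\operatorname{J}^2(\pi)$, again verified by a computer algebra rank check at a well-chosen point.

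The main obstacle is the sheer size of the symbolic expressions. The fully expanded components of $\theta_A$, $\de\sigma_A$, $\de\varrho_A$ at a generic second-order jet run to many pages, so choosing an evaluation point $\theta$ that simultaneously simplifies these expressions \emph{and} renders the relevant $7\times 7$ Jacobian minor nondegenerate is delicate. The standard remedy, already used in the verification of \autoref{Compind} and \autoref{Atmost}, is to specialize as many jet coordinates as possible to $0$ while retaining a small number of symbolic parameters, and then to exhibit one nonvanishing top Jacobian minor symbolically; the same tactic should succeed here.
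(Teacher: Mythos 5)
Your proposal is correct, and its core mechanism is the paper's own: both parts reduce, via rationality (indeed polynomiality) of all relevant expressions in the jet chart extending $\left(x,p,y,q,v^1,\ldots,v^4\right)$, to a rank check at a single well-chosen point of $\operatorname{J}^2(\pi)$, performed by computer algebra; the paper's entire proof is the sentence ``identical to that of \autoref{Atmost} and \autoref{Compind}'', plus the remark that the upper bound can alternatively be obtained by ``analytical continuation'' from the hyperbolic part. Where you genuinely diverge is in the packaging of the upper bound: you obtain the hyperbolic bound computation-free by transferring \autoref{Atmost} through the two-fold covering of \hyperref[BoE]{n.~\ref{BoE}}, using that pullback along the (deck transformation $\mathfrak{I}$, local diffeomorphism) covering preserves independence --- note that this last observation alone suffices, so your transcendence-degree remark, while true for rational invariants, is dispensable --- and you then handle the elliptic locus by rerunning the Hamiltonian-lift rank computation on $\operatorname{J}^2(\pi)$. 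The paper's alternative route is the mirror image and slightly more economical: since the lifts of Hamiltonian fields to $\operatorname{J}^2(\pi)$ are \emph{polynomial} in the chart, the condition ``$\operatorname{codim}H_\theta\le 7$'' established at one hyperbolic point holds on a Zariski open dense subset of $\operatorname{J}^2(\pi)$, which automatically meets the elliptic locus ($\Delta<0$), so no second computation is needed there. Your $A\mapsto -A$ sanity check (with $\theta\mapsto-\theta$, $\sigma\mapsto-\sigma$, $\varrho\mapsto\varrho$, each $\tilde{\mathcal{J}}^k$ collecting an even number of sign changes) is a worthwhile addition that the paper leaves implicit in its construction of the $\tilde{\mathcal{J}}^k$; it is what guarantees these are honest invariants of the MAE rather than of the ordered pair of distributions. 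In short: part (a) coincides with the paper; part (b) is a valid hybrid, trading the paper's single polynomial-continuation argument for a covering-space transfer plus one extra elliptic computation, with neither route having a gap.
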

\begin{proof}
Identical to that of \autoref{Atmost} and \autoref{Compind}. Alternatively, the assertion concerning upper bound may be obtained as the ``analytical continuation'' of the hyperbolic part. Indeed, as it is easy to see, the lifting of Hamiltonian vector fields to $\operatorname{J}^2(\pi)$ is described by polynomial functions in the local chart in $\operatorname{J}^2(\pi)$ that is a natural extension of the chart $\left(x,p,y,q,v^1,v^2,v^3,v^4\right)$ introduced in \hyperref[BoE]{n.~\ref{BoE}}.
\end{proof}

Differential invariants of a contact MAE with a fixed symmetry $X$ can easily be obtained from the corresponding symplectic equations. Indeed, if $\mathcal{I}_{\mathcal{E}}$ is a differential invariant of such a contact equation, then $X\left(\mathcal{I}_{\mathcal{E}}\right)=0$. This means that (locally) $\mathcal{I}_{\mathcal{E}}=\pi_M^\ast\left(\mathcal{J}_{\mathcal{E}_{\textrm{sp}}}\right)$, with $\mathcal{J}_{\mathcal{E}_{\textrm{sp}}}$ being a differential invariant of the corresponding symplectic equation. If $X$ is multiplied by a constant factor, the symplectic structure on $M$ corresponding to $X$ does the same. So, differential invariants of contact MAE with a fixed one-dimensional algebra of symmetries are those differential invariants of symplectic MAEs that do not change when the underlying symplectic structure is multiplied by a constant factor. It is easy to see that the passage from $\Omega$ to $c\Omega$ transforms basic differential invariants $\tilde{\mathcal{J}}^1$, $\ldots$, $\tilde{\mathcal{J}}^7$ as follows:
\[
c^{-1}\tilde{\mathcal{J}}^1\;,\quad c^{-2}\tilde{\mathcal{J}}^2\;,\quad c^{-4}\tilde{\mathcal{J}}^3\;,\quad c^{-2}\tilde{\mathcal{J}}^4\;,\quad c^{-4}{\mathcal{J}}^5\;,\quad c^{-2}\tilde{\mathcal{J}}^6\;,\quad c^{-6}\tilde{\mathcal{J}}^7\;.
\]
Now, by dividing these invariants by the appropriate power of the first one, we obtain contact differential invariants
\[
\frac{\tilde{\mathcal{J}}^2}{\left(\tilde{\mathcal{J}}^1\right)^2}\;,\quad\frac{\tilde{\mathcal{J}}^3}{\left(\tilde{\mathcal{J}}^1\right)^4}\;,\quad\frac{\tilde{\mathcal{J}}^4}{\left(\tilde{\mathcal{J}}^1\right)^2}\;,\quad\frac{\tilde{\mathcal{J}}^5}{\left(\tilde{\mathcal{J}}^1\right)^4}\;,\quad\frac{\tilde{\mathcal{J}}^6}{\left(\tilde{\mathcal{J}}^1\right)^2}\;,\quad\frac{\tilde{\mathcal{J}}^7}{\left(\tilde{\mathcal{J}}^1\right)^6}\;,
\]
for contact MAEs with a fixed one-dimensional algebra of symmetries.

\section{Higher Order Invariants and Symmetries}

Invariant vector fields $Z$, $Z'$ of the distribution $\mathcal{D}$ (see \hyperref[DIDS]{Section~\ref{DIDS}}) are of the first jet order. It is not difficult to construct second order invariant vector fields for $\mathcal{D}$. Namely, such are
\begin{multline*}
Z_{00}=\Gamma^{-1}\ast\left(\rho\wedge\de\rho\right)\;,\quad Z_{01}=\Gamma^{-1}\ast\left(\rho\wedge\de\rho'\right)\;,\\ Z_{10}=\Gamma^{-1}\ast\left(\rho'\wedge\de\rho\right)\;,\quad Z_{11}=\Gamma^{-1}\ast\left(\rho'\wedge\de\rho'\right)\;.
\end{multline*}
An alternative definition of fields $Z_{ij}$ is 
\begin{multline*}
Z_{00}\ins\frac12\Omega^2=\rho\wedge\de\rho\;,\quad Z_{01}\ins\frac12\Omega^2=\rho\wedge\de\rho'\;,\\ Z_{10}\ins\frac12\Omega^2=\rho'\wedge\de\rho\;,\quad Z_{11}\ins\frac12\Omega^2=\rho'\wedge\de\rho'\;.
\end{multline*}

\begin{prop}\label{NR2}
For a generic distribution $\mathcal{D}$, the invariants $Z_{00}$, $Z_{01}$, $Z_{10}$, $Z_{11}$ are linearly independent fields.
\end{prop}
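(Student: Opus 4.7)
The plan is to follow the same rationality/specialization strategy used in the proofs of \autoref{Fourindip}, \autoref{Compind} and \autoref{Atmost}. First I would observe that, in the jet coordinates introduced in \hyperref[BoE]{n.~\ref{BoE}}, each of the $1$-forms $\rho$, $\rho'$ and their differentials $\de\rho$, $\de\rho'$ has components that are rational functions of the coordinates on $\operatorname{J}^2(\gamma)$, since $Z$, $Z'$, $\omega$, $\omega'$ arise from projectors and Lie brackets of the generators of $\mathcal{D}$, and the symplectic Hodge star and $\Gamma^{-1}$ are linear algebraic operations on the coefficients. Consequently the four vector fields $Z_{00}$, $Z_{01}$, $Z_{10}$, $Z_{11}$ have rational components in these coordinates.

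Next I would reduce linear independence at a point $\theta\in\operatorname{J}^2(\gamma)$ to the non-vanishing of the scalar
\[
\Delta_{\mathcal{D}}:=\ast\bigl(\Gamma(Z_{00})\wedge\Gamma(Z_{01})\wedge\Gamma(Z_{10})\wedge\Gamma(Z_{11})\bigr)\,,
\]
which, by the previous step, is a rational function on $\operatorname{J}^2(\gamma)$. The key general principle (already invoked in \autoref{Fourindip} and \autoref{Compind}) is that a rational function which is nonzero at a single point is nonzero on a Zariski open and dense subset; hence it suffices to exhibit one distribution $\mathcal{D}$ and one point where $\Delta_{\mathcal{D}}\neq 0$.

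For the specialization step I would reuse one of the test distributions employed in \autoref{Fourindip} (say, the distribution spanned by $(xy+1)\partial_p+\partial_y+pq\partial_q$ and $\partial_x+\partial_p+xy\partial_q$, for which $\rho$, $\rho'$ are already explicitly recorded), compute $\de\rho$, $\de\rho'$, then $\rho\wedge\de\rho$, $\rho\wedge\de\rho'$, $\rho'\wedge\de\rho$, $\rho'\wedge\de\rho'$ and apply $\Gamma^{-1}\circ\ast$ to obtain the four vector fields, and finally evaluate the $4\times 4$ determinant of their components at a convenient point (for example the origin, or some nearby point where denominators do not vanish). As in \autoref{Compind}, the cleanest way to carry this out is a direct symbolic computation in \cocoa\ or Maple, the output of which is a single nonzero rational number proving $\Delta_{\mathcal{D}}\neq 0$ at the chosen point.

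The main obstacle is not conceptual but computational: one must make sure that the candidate distribution is generic enough for the four $Z_{ij}$ to actually be independent (otherwise the specialization would only prove a weaker statement). If the first test distribution fails, the remedy is exactly the one indicated in the footnote of \autoref{Fourindip}, namely to try a few other explicit generators until $\Delta_{\mathcal{D}}$ evaluates to a nonzero constant. Once such a distribution and point are found, the rationality argument extends independence from that single point to a Zariski open and dense subset of $\operatorname{J}^2(\gamma)$, which is precisely the assertion of the proposition.
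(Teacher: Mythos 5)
Your proposal is correct and follows essentially the same route as the paper, whose entire proof consists of observing that it suffices to exhibit one distribution with independent fields and pointing to the very test distribution from \autoref{Fourindip} that you chose. The extra detail you supply --- rationality of the $Z_{ij}$ in jet coordinates and the reduction of independence to the nonvanishing of $\ast\bigl(\Gamma(Z_{00})\wedge\Gamma(Z_{01})\wedge\Gamma(Z_{10})\wedge\Gamma(Z_{11})\bigr)$ --- is exactly the implicit justification the paper relies on (and spells out in \autoref{Fourindip} and \autoref{Compind}), so it fills in the argument rather than departing from it.
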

\begin{proof}
It suffices to find a distribution for which these fields are independent. For instance, a such one is that in the proof of \autoref{Fourindip}.
\end{proof}
 
According to this proposition, four invariant vector fields $Z_{ij}$ form an invariant e-structure whose invariants, scalar or not, are differential invariants of $\mathcal{D}$. Moreover, one can construct various invariant e-structures as combinations of invariant vector fields $Z$, $Z'$ and $Z_{ij}$. For instance, the e-structure considered in \cite[Sect.~6, Theorem~4]{K} is $\left(-2Z,-2Z',1/\left(2\mathcal{I}^3_{\mathcal{D}}-2\mathcal{I}^2_{\mathcal{D}}\right)P\left(\left[Z,Z'\right]\right),1/\left(2\mathcal{I}^3_{\mathcal{D}}\right)P'\left(\left[Z,Z'\right]\right)\right)$. It should be stressed that second order SDIs of SMAEs derived from this e-structure come from the commutator $\left[Z, Z'\right]$ (cf.~\cite[p.~392]{KLR}), while other commutators of these invariant vector fields give SDIs of $3$-rd order. \footnote{Vector fields composing the e-structure considered in \cite[p.~435]{KLR} involve fields $Z$, $Z'$, $Z_{ij}$, the operator $A$ and other second order scalar differential invariants and hence are rather complicated.}

Now we have at our disposal four invariant differential forms, namely, $\omega$, $\omega'$, $\rho$, $\rho'$ and six invariant vector fields $Z$, $Z'$, $Z_{ij}$. By applying to them standard operations of tensor analysis we easily obtain numerous differential invariants of higher order. In particular, by successively applying these vector fields to invariants $\mathcal{I}^k_{\mathcal{D}}$'s we find scalar differential invariants of higher than two order.

Since the symplectic form $\Omega$ is a differential invariant for $\mathcal{D}$, the Poisson bracket of two scalar differential invariants is a scalar differential invariant as well. For instance, $\left\{\mathcal{I}^k_{\mathcal{D}},\mathcal{I}^l_{\mathcal{D}}\right\}$ is a third order differential invariant of $\mathcal{D}$.

Recall that a classical (infinitesimal) symmetry of a PDE $\mathcal{E}\subset\operatorname{J}^k$ is a contact vector field whose natural lift to $\operatorname{J}^k$ is tangent to $\mathcal{E}$. In our context this translates to be a Hamiltonian field that leave invariant the distribution~$\mathcal{D}$. Obviously, the value of a scalar differential invariant is constant along a trajectory of a symmetry. This implies that if generic orbits of the symmetry algebra of $\mathcal{D}$ is of dimension~$l$, then the number of independent differential invariants of $\mathcal{D}$ is not greater than $4-l$. In particular, a MAE does not admit nontrivial infinitesimal symmetries if it possesses four independent invariants. Moreover, if a Hamiltonian vector field $X_f$ is a symmetry of a symplectic MAE $\mathcal{E}$, then $\left\{f,\mathcal{I}_{\mathcal{E}}\right\}=0$ for any scalar differential invariant $\mathcal{I}$. This observation is very useful in practical search of symmetries for concrete MAEs.

\section{An Application}

Invariants $\tilde{\mathcal{J}}^1,\ldots ,\tilde{\mathcal{J}}^7$ are independent for generic symplectic MAEs, nevertheless, they and related invariant differential forms and vector fields are useful for nongeneric equations as well.  In this section we illustrate this point by applying the previously developed machinery to hyperbolic equations of the form
\begin{equation}
\label{Special}
u_{xy}+D=0\;,\qquad D=D\left(x,y,u_x,u_y\right)\;.
\end{equation}
In particular, we shall give a solution of the linearization problem, i.e., when a symplectic hyperbolic MAE is equivalent to a linear one. Distributions $\mathcal{D}$ and $\mathcal{D}'$ associated with \eqref{Special} are
\begin{equation}
\label{SpecialD}
\mathcal{D}=\langle\;\partial_p\,,\,\partial_x-D\partial_q\;\rangle\,,\quad\mathcal{D}'=\langle\;\partial_q\,,\,\partial_y-D\partial_p\;\rangle
\end{equation}
and hence
\[
Z=-D_p\partial_q\;,\quad \rho=-D_p\de y\;,\quad Z'=-D_q\partial_p\;,\quad \rho'=-D_q\de x\;.
\]
The distributions $\mathcal{D}_{(1)}$ and $\mathcal{D}'_{(1)}$ are integrable \footnote{$\mathcal{D}_{(1)}$ denotes the distribution generated by $\mathcal{D}$ and $\left[\mathcal{D},\mathcal{D}\right]$.}, and, if $\mathcal{D}$ and $\mathcal{D}'$ are not integrable, then $\mathcal{D}_{(1)}=\left\{\de y=0\right\}=\langle Z\rangle^\perp$ and $\mathcal{D}'_{(1)}=\left\{\de x=0\right\}=\langle Z'\rangle^\perp$. The inverse assertion is also true.

\begin{prop}\label{Specialprop}
A non-Lagrangian $2$-distribution $\mathcal{D}$ is associated with a symplectic equation equivalent to \eqref{Special} if and only if distributions $\mathcal{D}_{(1)}$ and $\mathcal{D}'_{(1)}$ are integrable.
\end{prop}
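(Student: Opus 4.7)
The proof is a biconditional. For the ``only if'' direction, starting from \eqref{SpecialD} one computes
\[
[\partial_p,\partial_x-D\partial_q]=-D_p\partial_q,\qquad[\partial_q,\partial_y-D\partial_p]=-D_q\partial_p,
\]
so $\mathcal{D}_{(1)}\subseteq\langle\partial_p,\partial_x,\partial_q\rangle=\ker\de y$ and $\mathcal{D}'_{(1)}\subseteq\ker\de x$. Both of these 3-distributions are integrable as kernels of exact 1-forms, and in the degenerate case $D_p=0$ (resp.\ $D_q=0$) the derived distribution merely reduces to the already integrable $\mathcal{D}$ (resp.\ $\mathcal{D}'$). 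Integrability therefore holds in every case.

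For the converse, the plan is to first treat the generic case in which neither $\mathcal{D}$ nor $\mathcal{D}'$ is integrable, so that $\mathcal{D}_{(1)}$ and $\mathcal{D}'_{(1)}$ are 3-dimensional. Frobenius then provides local functions $y$, $x$ with $\mathcal{D}_{(1)}=\ker\de y$ and $\mathcal{D}'_{(1)}=\ker\de x$. Setting $X_x:=\Gamma^{-1}(\de x)$ and $X_y:=\Gamma^{-1}(\de y)$, the $\Omega$-orthogonality relation $(\mathcal{D}'_{(1)})^{\perp}\subseteq(\mathcal{D}')^{\perp}=\mathcal{D}$ yields $X_x\in\mathcal{D}$, and symmetrically $X_y\in\mathcal{D}'$. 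Since $X_x\in\mathcal{D}\subseteq\ker\de y$, we obtain $\{x,y\}=\de y(X_x)=0$.

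Next, by the Darboux theorem for a pair of Poisson-commuting functions with independent differentials, the pair $(x,y)$ extends to a canonical chart $(x,p,y,q)$ on a neighbourhood, in which $X_x=\partial_p$ and $X_y=\partial_q$, so $\partial_p\in\mathcal{D}$ and $\partial_q\in\mathcal{D}'$. A second generator of $\mathcal{D}$, being forced into $\ker\de y=\langle\partial_x,\partial_p,\partial_q\rangle$, can be reduced modulo $\partial_p$ to $a\partial_x-D\partial_q$ for some functions $a,D$; the non-Lagrangian condition $\mathcal{D}\cap\mathcal{D}'=0$ forces $a\ne 0$ (otherwise this generator would lie in $\mathcal{D}'$), so after normalization $\mathcal{D}=\langle\partial_p,\partial_x-D\partial_q\rangle$. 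A direct computation of the $\Omega$-orthogonal complement then yields $\mathcal{D}'=\langle\partial_q,\partial_y-D\partial_p\rangle$ with the \emph{same} function $D$, matching \eqref{SpecialD} and identifying the equation as \eqref{Special}.

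The main obstacle lies in the non-generic subcases where $\mathcal{D}$ or $\mathcal{D}'$ is itself integrable and the corresponding derived distribution is only 2-dimensional. Frobenius still produces foliations, but supplementary care is needed to assemble canonical coordinates compatible with both foliations and with $\Omega$; a paired-Frobenius/Weinstein-type argument replaces the clean Darboux step above, after which the final normalization yielding the single function $D$ proceeds exactly as in the generic case.
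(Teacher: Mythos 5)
Your ``only if'' direction and the generic case of the converse are correct and essentially follow the paper's route, with a small streamlining worth noting: where you obtain $X_x=\Gamma^{-1}(\de x)\in\left(\mathcal{D}'_{(1)}\right)^{\perp}\subseteq\left(\mathcal{D}'\right)^{\perp}=\mathcal{D}$ directly and get $\left\{x,y\right\}=\de y\left(X_x\right)=0$ from $X_x\in\mathcal{D}\subseteq\ker\de y$, the paper instead identifies $\mathcal{D}_{(1)}=\langle Z\rangle^{\perp}$, deduces $Z=-\lambda X_y$, $Z'=-\lambda'X_x$, and derives $\left\{x,y\right\}=0$ from $\Omega\left(Z,Z'\right)=0$; your identification of $\mathcal{D}'$ by directly checking that $\langle\partial_q,\partial_y-D\partial_p\rangle$ is $\Omega$-orthogonal to $\mathcal{D}$ is also a legitimate shortcut past the paper's $D=D'$ step. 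The genuine gap is the two degenerate cases of the converse, which you only gesture at: (i) $\mathcal{D}$ non-integrable, $\mathcal{D}'$ integrable, and (ii) both integrable. These occupy most of the paper's proof and are \emph{not} handled by ``the final normalization\dots exactly as in the generic case.'' In case (i) the paper writes $\mathcal{D}'=\left\{\de x=0,\de f=0\right\}$, hence $\mathcal{D}=\langle X_x,X_f\rangle$, deduces $\left\{x,y\right\}=\left\{f,y\right\}=0$, i.e., $f_q=0$ in a canonical chart, and obtains $D=f_y/f_p$ (with the special feature $D_q=0$). In case (ii) neither derived distribution is a corank-one kernel, so the functions $x,y$ of your Darboux step are not supplied by your setup at all; the paper instead shows that the leaf-defining functions $f,g$ of $\mathcal{D}$ generate a Poisson subalgebra with nontrivial bracket (via $X_{\left\{f,g\right\}}=\left[X_f,X_g\right]\in\mathcal{D}'=\langle X_f,X_g\rangle$, forcing $\de\left\{f,g\right\}=\alpha\de f+\beta\de g$), builds canonical coordinates $(y,q)$ and $(x,p)$ from the two subalgebras, and lands on the normal form $u_{xy}=0$. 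As submitted, your proof of the converse covers only the generic case.

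Ironically, the gap is fillable by your own mechanism, without any ``paired-Frobenius/Weinstein-type'' machinery: in every case Frobenius supplies a nonconstant $y$ with $\mathcal{D}_{(1)}\subseteq\ker\de y$ and a nonconstant $x$ with $\mathcal{D}'_{(1)}\subseteq\ker\de x$ (when a derived distribution is $2$-dimensional, take any first integral of its foliation). Then exactly as in your generic argument, $\Omega\left(X_x,V\right)=\de x(V)=0$ for $V\in\mathcal{D}'\subseteq\ker\de x$ gives $X_x\in\left(\mathcal{D}'\right)^{\perp}=\mathcal{D}$, symmetrically $X_y\in\mathcal{D}'$, and $\left\{x,y\right\}=\de y\left(X_x\right)=0$ since $\mathcal{D}\subseteq\ker\de y$; pointwise independence of $\de x,\de y$ follows from $X_x\in\mathcal{D}$, $X_y\in\mathcal{D}'$ and $\mathcal{D}\cap\mathcal{D}'=0$ (the non-Lagrangian splitting), so the Darboux extension, the placement $\partial_p=X_x\in\mathcal{D}$, $\partial_q=X_y\in\mathcal{D}'$, and the final normalization to \eqref{SpecialD} go through verbatim. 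Had you carried this out, you would have had a uniform proof, arguably cleaner than the paper's three-case analysis (though without its bonus normal forms $D_q=0$ and $u_{xy}=0$ in the degenerate cases); but a one-sentence appeal to unspecified supplementary care does not establish the statement.
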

\begin{proof}
Assume that $\mathcal{D}$ and $\mathcal{D}'$ are not integrable, i.e., that $\mathcal{D}_{(1)}$ and $\mathcal{D}'_{(1)}$ are $3$-dimensional. Therefore, there are (locally) functions $x,y\in\operatorname{C}^\infty(M)$ such that $\mathcal{D}_{(1)}=\left\{\de y=0\right\}$ and $\mathcal{D}'_{(1)}=\left\{\de x=0\right\}$, or, equivalently, $\mathcal{D}_{(1)}=\langle X_y\rangle^\perp$ and $\mathcal{D}'_{(1)}=\langle X_x\rangle^\perp$, where $X_H$ stands for the Hamiltonian vector field with the Hamiltonian $H\in\operatorname{C}^{\infty}(M)$. On the other hand, $Z$ is $\Omega$-orthogonal to $\mathcal{D}$ and belongs to $\mathcal{D}_{(1)}$. So, $\mathcal{D}_{(1)}=\langle Z\rangle^\perp$. This implies proportionality of $Z$ and $X_y$ and we put $Z=-\lambda X_y$, $\lambda\in\operatorname{C}^\infty(M)$. Similarly we find that $Z'=-\lambda' X_x$. Note that, by the assumption, $Z\ne 0$, $Z'\ne 0$.

Since $\Omega\left(Z,Z'\right)=\lambda\lambda'\Omega(X_y,X_x)=\lambda\lambda'\left\{x,y\right\}$, the $\Omega$-orthogonality of $Z$ and $Z'$ implies that $\left\{x,y\right\}=0$. Hence there exists a canonical chart of the form $\left(x,p,y,q\right)$, i.e., $\Omega=\de p\wedge\de x+\de q\wedge\de y$. In such a chart, $Z=-\lambda\partial_q$, $Z'=-\lambda'\partial_p$ and hence $\rho=-\lambda\de y$, $\rho'=-\lambda'\de x$. Since $\mathcal{D}_{(1)}=\left\{\de y=0\right\}$ and $Z'\in\mathcal{D}$, the distribution $\mathcal{D}$ is generated by $\partial_p$ and a vector field of the form $\alpha\partial_q+\beta\partial_x$. Since $\mathcal{D}$ is not Lagrangian, $\beta\ne 0$, and $\mathcal{D}=\langle\partial_p,\partial_x-D\partial_q\rangle$ with $D=-\alpha/\beta$. Similarly, we find that $\mathcal{D}'=\langle\partial_q,\partial_y-D'\partial_p\rangle$. Finally, $\Omega$-orthogonality of $\partial_x-D\partial_q$ and $\partial_y-D'\partial_p$ implies $D=D'$. This shows that $\mathcal{D}$ and $\mathcal{D}'$ are of the form \eqref{SpecialD}. This proves the assertion for nonintegrable $\mathcal{D}$ and $\mathcal{D}'$.

Now assume that $\mathcal{D}$ is not integrable and $\mathcal{D}'$ is integrable. As above we see that $\mathcal{D}_{(1)}=\left\{\de y=0\right\}=\langle X_y\rangle^\perp$ and $\mathcal{D}'=\left\{\de x=0, \de f=0\right\}$ for some functions $x,y,f\in\operatorname{C}^\infty(M)$. Then $\mathcal{D}=\langle X_x, X_f\rangle$. The inclusion $\mathcal{D}\subset\mathcal{D}_{(1)}$ implies $\de y\left(X_x\right)=\de y\left(X_f\right)=0$, or, equivalently, $\left\{x,y\right\}=\left\{f,y\right\}=0$. Since $\left\{x,y\right\}=0$, a canonical chart of the form $(x,p,y,q)$ exists and $\left\{f,y\right\}=0\iff f_q=0$. So,
\begin{multline*}
\mathcal{D}=\langle X_x,X_f\rangle=\langle X_x,f_xX_x+f_yX_y+f_pX_p\rangle\\=\langle X_x,f_yX_y+f_pX_p\rangle=\langle\partial_p,f_y\partial_q-f_p\partial_x\rangle\;.
\end{multline*}
Notice that $f_p\ne 0$, otherwise, $\mathcal{D}$ would be Lagrangian. So, $\mathcal{D}=\langle\partial_p,\partial_x-D\partial_q\rangle$ with $D=f_y/f_p$. The distribution $\langle\partial_q,\partial_y-D\partial_p\rangle$ is, obviously, $\Omega$-orthogonal to $\mathcal{D}$ and as such coincides with $\mathcal{D}'$. Hence in the considered case $\mathcal{D}$ and $\mathcal{D}'$ have the form~\eqref{SpecialD} with peculiarity that $D_q=0$.

Finally, if $\mathcal{D}$ and $\mathcal{D}'$ are integrable, then $\mathcal{D}=\left\{\de f=0,\de g=0\right\}$ and $\mathcal{D}'=\left\{\de f'=0,\de g'=0\right\}$. On the other hand, the distribution $\langle X_f,X_g\rangle$ is orthogonal to $\mathcal{D}$ and, hence, coincides with $\mathcal{D}'$. By this reason $X_f\ins\de f'=X_f\ins\de g'=0$, or, equivalently, $\left\{f,f'\right\}=\left\{f,g'\right\}=0$. Similarly, $\left\{g,f'\right\}=\left\{g,g'\right\}=0$. Moreover, integrability of $\mathcal{D}'$ implies $X_{\left\{f,g\right\}}=\left[X_f,X_g\right]\in\mathcal{D}'\iff X_{\left\{f,g\right\}}=\alpha X_f+\beta X_g\iff\de\left\{f,g\right\}=\alpha\de f+\beta\de g$. The last relation shows that $\left\{f,g\right\}$ is a function of $f$ and~$g$. Also, note that $\left(\alpha,\beta\right)\ne (0,0)$ since, otherwise, $\mathcal{D}$ would be Lagrangian. So, the $\operatorname{C}^\infty$--closed subalgebra of $\operatorname{C}^\infty(M)$ generated by $f$ and $g$ is a Poisson subalgebra with nontrivial bracket and as such admits a canonical chart $(y,q)$, $y=y(f,g)$, $q=q(f,g)$. Similarly, one can construct functions $x=x(f',g')$, $p=p(f',g')$ with $\left\{x,p\right\}=1$. Then $(x,p,y,q)$ is a canonical chart for $\Omega$ and $\mathcal{D}=\langle\partial_p,\partial_x\rangle$, $\mathcal{D}'=\langle\partial_q,\partial_y\rangle$. In other words, the corresponding to $\mathcal{D}$ equation is~$u_{xy}=0$.
\end{proof}

We shall call an equation \eqref{Special} \emph{generic} if distributions $\mathcal{D}$ and $\mathcal{D}'$ are both nonintegrable. In this case $\mathcal{D}_{(1)}=\left\{\de y=0\right\}$ and $\mathcal{D}'_{(1)}=\left\{\de x=0\right\}$, with $x,y$ uniquely defined up to a transformation $(x,y)\mapsto\left(\overline{x}=\overline{x}(x),\overline{y}=\overline{y}(y)\right)$. As it is easy to see, the transformation of corresponding canonical charts is \begin{equation}\label{Tcc}(x,p,y,q)\mapsto\left(\;\overline{x}=\overline{x}(x)\,,\,\overline{p}=\frac{1}{\de\overline{x}/\de x}\left(p+\varphi_x\right)\,,\,\overline{y}=\overline{y}(y)\,,\,\overline{q}=\frac{1}{\de\overline{y}/\de y}\left(q+\varphi_y\right)\right)\end{equation} with $\varphi=\varphi(x,y)$ being an arbitrary function. The Lie algebra associated with the pseudo-group~\eqref{Tcc} is formed by Hamiltonian vector fields \[X_{a(x)p+b(y)q+\psi(x,y)}\] where $a(x)$, $b(y)$, $\psi(x,y)$ are arbitrary smooth functions.

For a distribution~\eqref{SpecialD} we have the following obvious relations
\[
\mathcal{L}_{Z}(\rho)=\mathcal{I}^1_{\mathcal{D}}\,\rho\;,\quad\mathcal{L}_{Z'}(\rho')=\mathcal{I}^1_{\mathcal{D}}\,\rho'\,,
\]
\[
\mathcal{L}_{Z}(\rho')=\mathcal{I}^{12}_{\mathcal{D}}\,\rho'\;,\quad\mathcal{L}_{Z'}(\rho)=\mathcal{I}^{21}_{\mathcal{D}}\,\rho
\]
with
\begin{equation}
\label{12D}
\mathcal{I}^{12}_{\mathcal{D}}:=-D_p\frac{D_{qq}}{D_q}\;,\quad\mathcal{I}^{21}_{\mathcal{D}}:=-D_q\frac{D_{pp}}{D_p}
\end{equation}
for generic distributions $\mathcal{D}$ and $\mathcal{D}'$.

So, $\mathcal{I}^{12}_{\mathcal{D}}$ and $\mathcal{I}^{21}_{\mathcal{D}}$ are differential invariants of distributions of the form~\eqref{SpecialD}. As it is easy to see, differential invariants $\mathcal{I}^1,\ldots ,\mathcal{I}^7$ for these distributions are:
\begin{gather}
\label{I1eD}\mathcal{I}^1_{\mathcal{D}}=-D_{pq}\;,\\
\label{I45eD}\mathcal{I}^4_{\mathcal{D}}=-2D_{pq}^2+2D_{pp}D_{qq}\;,\quad\mathcal{I}^5_{\mathcal{D}}=\frac12\mathcal{I}^4_{\mathcal{D}}=-D_{pq}^2+D_{pp}D_{qq}\;,\\
\label{AID}\mathcal{I}^2_{\mathcal{D}}=\mathcal{I}^3_{\mathcal{D}}=\mathcal{I}^6_{\mathcal{D}}=\mathcal{I}^7_{\mathcal{D}}=0\;.
\end{gather}
This shows that $\mathcal{I}^{12}_{\mathcal{D}}$ and $\mathcal{I}^{21}_{\mathcal{D}}$ can not be expressed in terms of the invariants $\mathcal{I}^k$'s. In other words, they are \emph{special} differential invariants, i.e., invariants for the special class of distributions $\mathcal{D}$ considered in this section, i.e., for which $\mathcal{D}_{(1)}$, ${\mathcal{D}'}_{(1)}$ are integrable.

A simple application of these invariants is that they completely characterize hyperbolic symplectic linear equations, i.e., equations of the form
\begin{equation}
\label{HSLE}
u_{xy}+\alpha(x,y)u_x+\beta(x,y)u_y+\gamma(x,y)=0\;.
\end{equation}

\begin{prop}\label{L1}
A hyperbolic symplectic MAE is symplectic equivalent to an equation~\eqref{HSLE} if and only if $\mathcal{D}_{(1)}$ and $\mathcal{D}'_{(1)}$ are integrable and either
\begin{itemize}
\item $Z\ne  0$, $Z'\ne0$ and $\mathcal{I}^1_{\mathcal{D}}=\mathcal{I}^{12}_{\mathcal{D}}=\mathcal{I}^{21}_{\mathcal{D}}=0$, or
\item $Z\ne 0$, $Z'=0$ (resp., $Z=0$, $Z'\ne 0$) and $\rho$ (resp., $\rho'$) admits a Hamiltonian characteristic belonging to $\mathcal{D}$ (resp., to $\mathcal{D}'$), or
\item $Z=Z'=0$.
\end{itemize}
\end{prop}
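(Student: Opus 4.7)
The plan is to invoke \autoref{Specialprop} first, reducing to equations of the form $u_{xy}+D(x,y,p,q)=0$ with distributions \eqref{SpecialD}, and then to split into the three listed cases according to which of $Z=-D_p\partial_q$, $Z'=-D_q\partial_p$ vanish. The necessity of the integrability of $\mathcal{D}_{(1)}$ and $\mathcal{D}'_{(1)}$ is automatic from \autoref{Specialprop} since any linear equation is of the form \eqref{Special}.

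In the generic case (first bullet, $Z,Z'\ne 0$), I would use the formulas \eqref{I1eD} and \eqref{12D}: the vanishing of $\mathcal{I}^1_{\mathcal{D}}$ gives $D_{pq}=0$ and hence $D=A(x,y,p)+B(x,y,q)$; then the vanishing of $\mathcal{I}^{12}_{\mathcal{D}}$, combined with $D_p\ne 0$, forces $B_{qq}=0$, and the vanishing of $\mathcal{I}^{21}_{\mathcal{D}}$, combined with $D_q\ne 0$, forces $A_{pp}=0$, yielding $D=\alpha p+\beta q+\gamma$. The reverse direction is immediate by substitution. The degenerate case $Z=Z'=0$ (third bullet) gives $D_p=D_q=0$, so $D=\gamma(x,y)$ and the equation is already linear. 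The symmetric second-bullet case $Z=0,\,Z'\ne 0$ follows from the $Z\ne 0,\,Z'=0$ case by applying the involution $\mathfrak{I}$.

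The substantive case is $Z\ne 0,\,Z'=0$, in which $D=D(x,y,p)$ and $\rho=-D_p\,\de y$. Writing out the conditions that a Hamiltonian field $X_f$ belong to $\mathcal{D}$ and satisfy $X_f\ins\rho=X_f\ins\de\rho=0$ yields the system $f_q=0$, $f_y=Df_p$, $D_{pp}f_x=D_{px}f_p$; the first two express that $f$ is a first integral of the characteristic ODE $p_y=-D(x,y,p)$ associated with the equation, and the third is the compatibility with $\Omega$. For a linear $D=\alpha(x,y)p+\gamma(x,y)$, one has $D_{pp}=0$ and $f=-x$ produces $X_f=\partial_p\in\mathcal{D}$ as a Hamiltonian characteristic. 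For the converse, from a nontrivial $f$ I would construct a symplectomorphism using $(x,f)$ as part of a new canonical chart so that $f$ becomes the new momentum $\bar p$, linearizing $\bar D$ as a function of $\bar p$.

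The main obstacle is precisely this converse in the middle case: producing the linearizing symplectomorphism. The target transformation does not lie in the restricted pseudo-group \eqref{Tcc}, which is the symmetry pseudo-group only for \emph{generic} equations of type \eqref{Special}; for equations with integrable $\mathcal{D}'$ the pseudo-group is strictly larger, and the role of the extra condition $D_{pp}f_x=D_{px}f_p$ is to ensure that the first integral $f$ can be completed to a canonical chart in which $\bar D$ becomes affine in $\bar p$. A guiding example is $D=e^p$, whose first integral $f=y-e^{-p}$ corresponds to the substitution $v:=e^{-u_x}$ that reduces $u_{xy}+e^{u_x}=0$ to the linear equation $v_y=1$.
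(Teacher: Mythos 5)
Your overall route is the same as the paper's: reduce to the form \eqref{Special} via \autoref{Specialprop}, then split cases according to the vanishing of $Z=-D_p\partial_q$ and $Z'=-D_q\partial_p$. Your handling of the first bullet (via \eqref{I1eD} and \eqref{12D}), of the necessity direction, and of the case $Z=Z'=0$ (the paper instead quotes the end of the proof of \autoref{Specialprop}, where the equation becomes $u_{xy}=0$; your $D_p=D_q=0$, $D=\gamma(x,y)$ is equally good) all match the paper. Your system $f_q=0$, $f_y=Df_p$, $D_{pp}f_x=D_{px}f_p$ for a Hamiltonian characteristic $X_f\in\mathcal{D}$ of $\rho$ is also correct. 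The gap is in the one step you yourself flag as the main obstacle, and your proposed fix there would fail: you cannot complete a chart of the adapted form \eqref{SpecialD} ``so that $f$ becomes the new momentum $\bar p$.'' Indeed, your first two equations say exactly that $\de f$ annihilates $\mathcal{D}'$; but in any adapted chart the annihilator of $\mathcal{D}'=\langle\partial_{\bar q},\partial_{\bar y}-\bar D\partial_{\bar p}\rangle$ is spanned by $\de\bar x$ and $\de\bar p+\bar D\,\de\bar y$, so $\de\bar p$ annihilates $\mathcal{D}'$ only if $\bar D=0$, which forces $Z=0$ and contradicts the standing hypothesis $Z\ne 0$ of this case.

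The correct normalization is the opposite one, and it is what the paper does when it says ``we can assume that $\phi=x$'': take $f$ as the new \emph{base} coordinate $\bar x$, not the momentum. This is always possible: $\{f,y\}=\de y\left(X_f\right)=0$ automatically, since $X_f\in\mathcal{D}\subset\mathcal{D}_{(1)}=\left\{\de y=0\right\}$, so one reruns the chart construction from the proof of \autoref{Specialprop} with $f$ in place of $x$ (here your correct remark that the relevant pseudo-group is larger than \eqref{Tcc} when $\mathcal{D}'$ is integrable is exactly the freedom being used). In the resulting chart $X_f=\pm\partial_{\bar p}$, and your third equation then reads $0=\mathcal{L}_{\partial_{\bar p}}\bar\rho=-\bar D_{\bar p\bar p}\,\de\bar y$. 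So its role is not, as you suggest, to ensure that $f$ ``can be completed to a canonical chart'' --- completion is automatic --- rather it \emph{is} the linearity condition $\bar D_{\bar p\bar p}=0$, which together with $\bar D_{\bar q}=0$ (from $Z'=0$) yields \eqref{HSLE}. Finally, note that your guiding example conflates a differential substitution with a symplectomorphism: $v=e^{-u_x}$, $v_y=1$ is not an equivalence of MAEs in the sense of the proposition. The equation $u_{xy}+e^{u_x}=0$ does satisfy the criterion, with $f=y-e^{-p}$ solving your system, but the linearizing map is the canonical chart with $\bar x=y-e^{-p}$, $\bar y=y$, suitably completed, not the substitution itself.
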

\begin{proof}
Integrability of $\mathcal{D}_{(1)}$ and $\mathcal{D}'_{(1)}$ allows one to bring the considered equation to the form~\eqref{Special} (\autoref{Specialprop}).

In the case $Z\ne  0$, $Z'\ne0$, vanishing of $\mathcal{I}^{12}_{\mathcal{D}}$ and $\mathcal{I}^{21}_{\mathcal{D}}$ gives $D_{pp}=D_{qq}=0$ (see~\eqref{12D}). Moreover,  by~\eqref{I1eD}, $D_{pq}=0$, and hence $D(x,p,y,q)$ is linear in $p,q$.

When only one of fields $Z$, $Z'$, say $Z'$, vanishes, the additional hypothesis gives a smooth function $\phi$ such that $X_\phi\ne 0$ belongs to $\mathcal{D}$ and is characteristic for~$\rho$. Since $X_\phi$ is $\Omega$-orthogonal to $\mathcal{D}'$, we have $\mathcal{D}'\subset\{\de\phi=0\}$. Looking at the corresponding case in the proof of \autoref{Specialprop}, we can assume that $\phi=x$. Therefore $0=\mathcal{L}_{X_x}(\rho)=-D_{pp}\de y$, i.e., $D_{pp}=0$. This condition, together with $D_q=0$ (which is due to $Z'=0$), again implies linearity of $D$ with respect to $p$, $q$.

In the case $Z=Z'=0$ the equation is equivalent to $u_{xy}=0$ (see the end of the proof of \autoref{Specialprop}).
\end{proof}

As it is easy to see, all invariants $\mathcal{I}^1,\ldots ,\mathcal{I}^7$ vanish for the distributions associated with a symplectic linear equation \eqref{HSLE}. The inverse is not, however, true.

\begin{expl}
For the distribution
\[
\mathcal{D}=\langle\;\partial_q\,,\,-q\partial_x+yq\partial_p+\partial_y\;\rangle\;,
\]
corresponding to the quasilinear equation
\[
u_yu_{xx}-u_{xy}+yu_y=0\;,
\]
we have
\[
\mathcal{D}'=\langle\;\partial_p+q\partial_q\,,\,\partial_x-y\partial_p\;\rangle\;,
\]
\[
Z=-\partial_x+y\partial_p\;,\quad\rho=y\de x+\de p\;,\quad Z'=0\;,\quad\rho'=0\;.
\]
All invariants $\mathcal{I}^1_{\mathcal{D}},\ldots ,\mathcal{I}^7_{\mathcal{D}}$ vanish since $\rho'=0$ and $\de\rho\wedge\de\rho=0$. On the other hand, since $\mathcal{D}_{(1)}$ is not integrable, this equation can not be brought to the form \eqref{SpecialD} (\autoref{Specialprop}), and hence to the form~\eqref{HSLE}.
\end{expl}
\begin{expl}
The distribution
\[
\mathcal{D}=\langle\;\partial_p\,,\,\partial_x-(p^2+x)\partial_q\;\rangle
\]
is of type \eqref{SpecialD}, with $D=p^2+x$. In this case we have
\[
\mathcal{D}'=\langle\;\partial_q\,,\,\partial_y-(p^2+x)\partial_p\;\rangle\;,\quad Z\ne 0\;,\quad Z'=0\;.
\]
Relations~\eqref{I45eD},~\eqref{AID} show that all invariants $\mathcal{I}^1_{\mathcal{D}},\ldots ,\mathcal{I}^7_{\mathcal{D}}$ vanish as for linear equations~\eqref{HSLE}. Nevertheless, the corresponding to $\mathcal{D}$ equation
\[
u_{xy}+u_x^2+x=0
\]
is not symplectic equivalent to~\eqref{HSLE}. Indeed, the invariant distribution $\ker\de\rho+\mathcal{D}'=\left\langle\;\partial_x\,,\,\partial_q\,,\,\partial_y-(p^2+x)\partial_p\;\right\rangle$ is $3$-dimensional and not integrable, while the similar distribution for \eqref{HSLE} is integrable.
\end{expl}

\begin{expl}
The distribution
\[
\mathcal{D}=\langle\;\partial_p\,,\,\partial_x-(p^2+q)\partial_q\;\rangle
\]
is of type \eqref{SpecialD} with $D=p^2+q$ and
\[
\mathcal{D}'=\langle\;\partial_q\,,\,\partial_y-(p^2+q)\partial_p\;\rangle\;.
\]
In this case $Z\ne 0$, $Z'\ne 0$ and all invariants $\mathcal{I}^1_{\mathcal{D}},\ldots ,\mathcal{I}^7_{\mathcal{D}}$ vanish according to~\eqref{I1eD}, \eqref{I45eD} and~\eqref{AID}. The corresponding to $\mathcal{D}$ equation
\[
u_{xy}+u_x^2+u_y=0
\]
is not symplectic equivalent to~\eqref{HSLE} because $\mathcal{I}^{21}\ne 0$ (see \autoref{L1}).
\end{expl}

More generally, a generic hyperbolic linear equation
\begin{equation}
\label{GLE}
u_{xy}+\alpha(x,y)u_x+\beta(x,y)u_y+\gamma(x,y)u+\delta(x,y)=0
\end{equation}
may be viewed as a symplectic one. For instance, if $\varphi(x,y)$ is a solution of \eqref{GLE}, then the substitution $u=\exp(v)+\varphi$ brings \eqref{GLE} to the form
\begin{equation}
\label{GLER}
v_{xy}+v_xv_y+\alpha(x,y)v_x+\beta(x,y)v_y+\gamma(x,y)=0\;.
\end{equation}
This reduction of \eqref{GLE} to a symplectic form \eqref{GLER} corresponds to $1$-parametric symmetry group $u=(1-\lambda)u+\lambda\varphi$, $\lambda\in\mathbb{R}$, of \eqref{GLE}, or, equivalently, to the infinitesimal contact symmetry
\[
\left(\varphi-u\right)\partial_u+\left(\varphi_x-p\right)\partial_p+\left(\varphi_y-q\right)\partial_q\;.
\]

\begin{prop}\label{NR}
A symplectic MAE is symplectic equivalent to an equation \eqref{GLER} if and only if distributions $\mathcal{D}_{(1)}$ and $\mathcal{D}'_{(1)}$ are integrable and $\mathcal{I}^1_{\mathcal{D}}=-1$, $\mathcal{I}^{12}_{\mathcal{D}}=\mathcal{I}^{21}_{\mathcal{D}}=0$.
\end{prop}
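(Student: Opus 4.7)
The plan is to argue by reducing to the canonical form~\eqref{Special} and then pinning down the structure of $D$ from the three scalar conditions.

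First I would dispatch the \emph{only if} direction by a direct computation. For the equation~\eqref{GLER} one has $D=pq+\alpha(x,y)p+\beta(x,y)q+\gamma(x,y)$, so $D_{pq}=1$, $D_{pp}=D_{qq}=0$, while $D_p=q+\alpha$ and $D_q=p+\beta$ are generically nonzero. Using \eqref{I1eD} and \eqref{12D}, this immediately yields $\mathcal{I}^1_{\mathcal D}=-1$ and $\mathcal{I}^{12}_{\mathcal D}=\mathcal{I}^{21}_{\mathcal D}=0$. Integrability of $\mathcal{D}_{(1)}$ and $\mathcal{D}'_{(1)}$ comes for free, since a distribution associated to an equation of the form~\eqref{Special} is already known to enjoy this property by the discussion preceding \autoref{Specialprop}. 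Invariance of all these conditions under symplectomorphisms then handles any symplectically equivalent equation.

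For the \emph{if} direction, integrability of $\mathcal{D}_{(1)}$ and $\mathcal{D}'_{(1)}$ together with \autoref{Specialprop} lets me choose a canonical chart $(x,p,y,q)$ in which the given SMAE takes the form $u_{xy}+D=0$, with $D=D(x,y,p,q)$. The hypotheses $\mathcal{I}^{12}_{\mathcal D}=\mathcal{I}^{21}_{\mathcal D}=0$ are meaningful precisely when $D_p,D_q\ne 0$, i.e., when both $\mathcal{D}$ and $\mathcal{D}'$ are nonintegrable; under this assumption the defining formulas~\eqref{12D} force $D_{pp}=D_{qq}=0$. Combining with $\mathcal{I}^1_{\mathcal D}=-D_{pq}=-1$ gives $D_{pq}=1$. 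A function $D$ on four variables satisfying $D_{pp}=D_{qq}=0$ is affine in $p$ with coefficients affine in $q$, hence
\[
D(x,y,p,q)=a(x,y)+b(x,y)p+c(x,y)q+e(x,y)pq,
\]
and the last mixed derivative condition forces $e(x,y)\equiv 1$. This is exactly the form~\eqref{GLER} after renaming $c\mapsto\alpha$, $b\mapsto\beta$, $a\mapsto\gamma$.

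The only subtle point is the treatment of the boundary cases where $Z$ or $Z'$ vanishes, because then $\mathcal{I}^{12}$, $\mathcal{I}^{21}$ are not a priori defined. I would handle these by remarking that \eqref{GLER} requires $D_{pq}=1\neq 0$, which (via the formulas in \hyperref[MAE]{n.~\ref{MAE}} for $Z,Z'$ under \eqref{SpecialD}) rules out $Z=0$ or $Z'=0$ at generic points; thus the hypothesis $\mathcal{I}^1_{\mathcal D}=-1$ already forces both distributions to be nonintegrable, so the regime in which $\mathcal{I}^{12}_{\mathcal D}$ and $\mathcal{I}^{21}_{\mathcal D}$ are defined is automatic. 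The main obstacle is mainly notational rather than mathematical: ensuring the special invariants $\mathcal{I}^{12}$, $\mathcal{I}^{21}$ truly are symplectic invariants of the class under consideration (which was done in the paragraph introducing~\eqref{12D}) and that their vanishing, combined with \autoref{Specialprop}, is enough to nail down the canonical form rather than only a form up to the residual pseudo-group~\eqref{Tcc}; but since the form~\eqref{GLER} is itself preserved by~\eqref{Tcc} (modulo redefining $\alpha,\beta,\gamma$), no further normalization is required.
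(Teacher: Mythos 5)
Your proposal is correct and follows essentially the same route as the paper: reduce via \autoref{Specialprop} to the canonical form~\eqref{Special}, note that $\mathcal{I}^1_{\mathcal D}=-1\ne 0$ forces $Z\ne 0$, $Z'\ne 0$ (so the special invariants $\mathcal{I}^{12}_{\mathcal D}$, $\mathcal{I}^{21}_{\mathcal D}$ are defined), and then read off $D_{pp}=D_{qq}=0$, $D_{pq}=1$ from \eqref{12D} and \eqref{I1eD}, exactly the argument the paper imports from the first case of \autoref{L1}. Your added details --- the explicit \emph{only if} computation and the remark that the residual pseudo-group~\eqref{Tcc} preserves the form~\eqref{GLER} --- merely spell out what the paper leaves implicit.
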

\begin{proof}
Since $\mathcal{D}_{(1)}$ and $\mathcal{D}'_{(1)}$ are integrable, then, by \autoref{Specialprop}, the considered equation is of type~\eqref{Special}. Moreover, it is a generic equation of type~\eqref{Special}, since $\mathcal{I}^1_{\mathcal{D}}\ne 0$ easily implies that $Z\ne 0$ and $Z'\ne 0$. Now, the same arguments as in the proof of the first case of \autoref{L1} prove the linearity.
\end{proof}

For the distribution associated with \eqref{GLER} we have $\mathcal{I}^1_{\mathcal{D}}=\mathcal{I}^5_{\mathcal{D}}=-1$,  $\mathcal{I}^4_{\mathcal{D}}=-2$, while $\mathcal{I}^2,\mathcal{I}^3,\mathcal{I}^6,\mathcal{I}^7$ vanish. The distribution $\langle\;\partial_p\,,\,\partial_x-(pq+p^2+q)\partial_q\;\rangle$ associated with the equation
\begin{equation}\label{NSE}
u_{xy}+u_xu_y+u_x^2+u_y=0
\end{equation}
has the same values of invariants $\mathcal{I}^1,\ldots ,\mathcal{I}^7$. However, since $\mathcal{I}^{21}$ for this distribution is different from zero, Equation~\eqref{NSE} is not symplectic equivalent to \eqref{GLER}.

Similar results can easily be obtained for the equation
\[
u_{xx}+u_{yy}+D=0,\qquad D=D(x,y,u_x,u_y)\;,
\]
which is an elliptic analogue of equation~\eqref{Special}. To this end, it suffices to use forms $\sigma$ and $\varrho$ of  \hyperref[DISM]{Section~\ref{DISM}}, which are elliptic substitutes of $\rho$ and $\rho'$.

\section{\texorpdfstring{Classes of Forms $\rho$ and $\rho'$}{Classes of Forms}}

Recall that the \emph{class} of a differential $1$-form is the number of independent variables figuring in its normal (Darboux) form. Denote by $r$ and $r'$ classes of differential forms $\rho$ and $\rho'$, respectively. We shall show that $\rho$ and $\rho'$ can be of any possible classes from $0$ to $4$. First of all, it is easy to see that all pairs $\left(r,r'\right)$ with $0\le r,r'\le 2$ are realized by distributions of the form \eqref{SpecialD}. In \autoref{Table} we indicate distributions which realize all remaining pairs $\left(r,r'\right)$. It is worth noticing that $r=4$ (resp., $r'=4$) if and only if ${\mathcal{I}^6_\mathcal{D}}\ne 0$ (resp., $\mathcal{I}^6_{\mathcal{D}'}\ne 0$). Also, $r\le 2$ (resp., $r'\le 2$) if and only if $\mathcal{D}_{(1)}$ is integrable (resp., $\mathcal{D}_{(1)}'$ is integrable).

\begin{table}
\caption{Examples for various classes $\left(r,r'\right)$.\label{Table}}
\medskip
\hskip-4cm\begin{minipage}{\textwidth}\scriptsize
\begin{tabular}{l|ccc}\hline
\multicolumn1l{$\left(r,r'\right)$} & $\mathcal{D}$ & $\mathcal{D}'$ & $\mathcal{E}$\\\hline\hline
$(0,3)$ & \distr{\partial_p+q\partial_q}{\partial_x+yq\partial_q} & \distr{\partial_q}{-q\partial_x+yq\partial_p+\partial_y} & $-u_yu_{xx}+u_{xy}-yu_y=0$\\
$(0,4)$ & \distr{-\partial_x+y\partial_p}{q\partial_p-\partial_q} & \distr{\partial_q}{\partial_x-y\partial_p+q\partial_y} & $u_{xx}+u_yu_{xy}+y=0$\\
$(1,3)$ & \distr{\partial_p+(q+1)\partial_q}{\partial_x+yq\partial_q} & \distr{\partial_q}{-(q+1)\partial_x+yq\partial_p+\partial_y} & $-(u_y+1)u_{xx}+u_{xy}-yu_y=0$\\
$(1,4)$ & \distr{\partial_p+f\footnote{\scriptsize$f:=y+q^2\exp(-x)$}\partial_q}{\partial_x+q\partial_q} & \distr{\partial_q}{-f\partial_x+q\partial_p+\partial_y} & $-F\footnote{\scriptsize$F:=y+u_y^2\exp(-x)$}u_{xx}+u_{xy}-u_y=0$\\
$(2,3)$ & \distr{\partial_p+p\partial_q}{\partial_x+q\partial_q}  & \distr{\partial_q}{-p\partial_x+q\partial_p+\partial_y} & $-u_xu_{xx}+u_{xy}-u_y=0$\\
$(2,4)$ & \distr{\partial_p+p\partial_q}{\partial_x+q^2\partial_q} & \distr{\partial_q}{-p\partial_x+q^2\partial_p+\partial_y} & $-u_xu_{xx}+u_{xy}-u_y^2=0$ \\
$(3,3)$ & \distr{\partial_p+\partial_y+p^2\partial_q}{\partial_x+y\partial_p} & \distr{\partial_x+y\partial_p+\partial_q}{\partial_y+p^2\partial_q} &$u_{xx}u_{yy}-u_{xy}^2-u_x^2u_{xx}+u_{xy}-yu_{yy}+yu_x^2=0$\\
$(3,4)$ & \distr{\partial_p+\partial_y+pq\partial_q}{\partial_x+y\partial_p} & \distr{\partial_x+y\partial_p+\partial_q}{\partial_y+pq\partial_q} & $u_{xx}u_{yy}-u_{xy}^2-u_xu_yu_{xx}+u_{xy}-yu_{yy}+yu_xu_y=0$\\
$(4,4)$ & \distr{\partial_p+\partial_y+pq\partial_q}{\partial_x+xy\partial_p} & \distr{\partial_x+xy\partial_p+\partial_q}{\partial_y+pq\partial_q} & $u_{xx}u_{yy}-u_{xy}^2-u_xu_yu_{xx}+u_{xy}-xyu_{yy}+xyu_xu_y=0$ \\\hline
\end{tabular}
\end{minipage}
\end{table}
\normalsize

\section*{Acknowledgements}

The first author was partially supported by the Italian Ministry of Education, University and Research (MIUR).

We thank Prof.\ Michal Marvan for providing us with a cross-checking on the upper bound for the number of independent invariants.

\bibliographystyle{plain}

\end{document}